\documentclass[a4paper]{article}

\usepackage[top=1.2in, bottom=1.5in, left=1.1in, right=1.8in]{geometry}
\usepackage{amsfonts}
\usepackage{amsmath}
\usepackage{amsthm,amssymb}
\usepackage{CJK,graphicx}
\usepackage{amscd}
\usepackage{amssymb}
\usepackage{mathrsfs}
\usepackage[all,cmtip]{xy}
\usepackage{lmodern}
\usepackage[Symbol]{upgreek}
\usepackage{bm}
\usepackage{eucal}
\usepackage[nopar]{lipsum}
\usepackage{rotating}
\usepackage{tikz}
\usepackage{calc}
\usepackage{tikz-cd}
\usepackage{mathabx}
\usepackage{tikz-3dplot}
\usepackage{hyperref}
\usetikzlibrary{calc}
\usetikzlibrary{decorations.markings,arrows}
\usetikzlibrary{shapes.geometric}
\usepackage{lipsum}
\usepackage{makecell}
\usepackage{times}

\newsavebox\CBox
\newcommand\hcancel[2][0.5pt]{%
	\ifmmode\sbox\CBox{$#2$}\else\sbox\CBox{#2}\fi%
	\makebox[0pt][l]{\usebox\CBox}%
	\rule[0.5\ht\CBox-#1/2]{\wd\CBox}{#1}}

\newtheorem{theorem}{Theorem}
\newtheorem{corollary}[theorem]{Corollary}

\newtheorem{lemma}[theorem]{Lemma}
\newtheorem{proposition}[theorem]{Proposition}

\newtheorem{remark}[theorem]{Remark}
\newtheorem{example}[theorem]{Example}

\numberwithin{equation}{section}

\begin{document}
	
\title{\textbf{Abundance of affine algebraic structures on stabilized cotangent bundles of surfaces}}\author{Yin Li}\date{}\maketitle

\begin{abstract}
We show that for any $g\geq2$, there exist uncountably many non-isomorphic smooth affine threefolds that are Stein deformation equivalent to $T^\ast\Sigma_g\times\mathbb{C}$, the product of the cotangent bundle of a genus $g$ oriented surface with the complex plane, answering a question of Ivan Smith. In fact, we prove that our family of smooth affine threefolds is parametrized by a $(6g-7)$-dimensional complex orbifold.
\end{abstract}

\section{Introduction}\label{section:intro}

For any $g\geq2$, denote by $\Sigma_g$ a closed oriented surface of genus $g$, and by $C_g$ a smooth genus $g$ complex algebraic curve. The difference is that for $\Sigma_g$ we only consider its smooth structure, while the curve $C_g$ can vary in a $(3g-3)$-dimensional moduli space $\mathcal{M}_g$. Let $\mathrm{Aut}(C_g)$ be the group of algebraic automorphisms of $C_g$, and let $\mathit{TC}_g$ be its tangent sheaf. The purpose of this paper is to prove the following theorem.

\begin{theorem}\label{theorem:main}
Let $g\in\mathbb{Z}_{\geq2}$. For any smooth algebraic curve $C_g$ and any $0\neq\alpha\in H^1(C_g,\mathit{TC}_g)$, there is a smooth affine threefold $X_{g,\alpha}$ that is Stein deformation equivalent to the stabilized cotangent bundle $T^\ast\Sigma_g\times\mathbb{C}$, equipped with the standard product Stein structure.

Moreover, different choices of $C_g$ in the moduli space of smooth curves give rise to non-isomorphic affine threefolds $X_{g,\alpha}$, regardless of the choice of $\alpha$. For a fixed curve $C_g$ and different choices of non-trivial first order deformations $\alpha,\beta\in H^1(C_g,\mathit{TC}_g)$, $X_{g,\alpha}$ is isomorphic to $X_{g,\beta}$ as smooth affine threefolds if and only if $\alpha$ and $\beta$ lie in the same $\mathrm{Aut}(C_g)$-orbit of the projective space $\mathbb{P}\left(H^1(C_g,\mathit{TC}_g)\right)$.
\end{theorem}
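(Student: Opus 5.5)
The plan is to build $X_{g,\alpha}$ from the extension determined by $\alpha$, and then to recover $C_g$ and the class $[\alpha]$ from the algebraic geometry of $X_{g,\alpha}$.

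\textbf{Construction.} Since $H^1(C_g,\mathit{TC}_g)\cong\mathrm{Ext}^1(\mathcal{O}_{C_g},\mathit{TC}_g)$, the class $\alpha$ determines a non-split extension
\[
0\to \mathit{TC}_g\to E_\alpha\to\mathcal{O}_{C_g}\to 0 .
\]
Let $S_\alpha\subset \mathrm{Tot}(E_\alpha)$ be the preimage of the constant section $1$. This is an affine-line bundle over $C_g$ modeled on $\mathit{TC}_g$. Equivalently, it is the complement of the section $\mathbb{P}(\mathit{TC}_g)$ in the ruled surface $\mathbb{P}(E_\alpha)$. Because $\alpha\neq0$, this section should have positive self-intersection and meet every curve, and I would then argue via Goodman's criterion that $S_\alpha$ is affine.

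This ampleness step is the first thing I would have to check carefully. The sign is suspicious, since $\deg \mathit{TC}_g<0$. If it fails, I would switch to the dual extension
\[
0\to\mathcal{O}\to E\to K_{C_g}\to 0,
\]
take the complement of $\mathbb{P}(\mathcal{O})$, and test the self-intersection again. The construction is then $X_{g,\alpha}:=S_\alpha\times\mathbb{C}$. The extra factor of $\mathbb{C}$ is expected to be needed in the next step, where it makes the deformation argument work for the stabilized bundle rather than for $T^\ast\Sigma_g$ itself.

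\textbf{Stein deformation.} Smoothly, $S_\alpha$ is a real plane bundle over $\Sigma_g$ with Euler number $\pm(2g-2)$. Hence $X_{g,\alpha}$ is a rank-two complex bundle over $\Sigma_g$, diffeomorphic to $T^\ast\Sigma_g\times\mathbb{C}$. To upgrade this to a Stein deformation equivalence I would use Cieliebak--Eliashberg. Both manifolds are flexible-or-subcritical-type Weinstein completions with the same core $\Sigma_g$ and homotopic almost complex structures, and in complex dimension $3$ the handles of index $\le 2$ are subcritical. So Weinstein homotopy classes should be determined by the formal data, namely the homotopy class of the almost complex structure, which I would compute through Chern classes. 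This gives the first assertion.

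\textbf{Distinguishing the threefolds.} Every morphism $\mathbb{A}^1\to C_g$ is constant. So for any affine curve in $X_{g,\alpha}$ isomorphic to $\mathbb{A}^1$, the projection to $C_g$ is constant, and the $\mathbb{A}^1$-fibration $X_{g,\alpha}\to C_g$ is canonical. I would show that any isomorphism $X_{g,\alpha}\cong X_{g',\beta}$ descends to an isomorphism $C_g\cong C_{g'}$, which gives the second assertion. For fixed $C_g$, the remaining structure is the $\mathbb{A}^2$-bundle $S_\alpha\times\mathbb{C}\to C_g$. I would recover the extension class up to scaling, with scaling coming from rescaling the fibers, and up to pulling back by $\mathrm{Aut}(C_g)$. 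Conversely, if $\beta=\lambda\phi^\ast\alpha$ for some scalar $\lambda$ and $\phi\in\mathrm{Aut}(C_g)$, the isomorphism is explicit.

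\textbf{Main obstacle.} The hardest step is this converse direction. An automorphism of the total space need not respect the product with $\mathbb{C}$. It could mix the $\mathbb{C}$ factor with the affine-bundle fibers, for instance by acting via $\mathrm{Aut}(\mathbb{A}^2)$ fiberwise with non-affine, base-dependent maps. I would first try to show that the sheaf of regular functions constant along fibers, together with a Makar-Limanov-type invariant (the kernels of locally nilpotent derivations), pins down a canonical linear structure. From that structure one would recover the filtration $\mathit{TC}_g\subset$ (fiber-linear part), and hence the class $[\alpha]\in\mathbb{P}(H^1(C_g,\mathit{TC}_g))/\mathrm{Aut}(C_g)$.
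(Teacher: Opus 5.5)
\textbf{The construction fails the first assertion.} With $X_{g,\alpha}:=S_\alpha\times\mathbb{C}$ the threefold is not Stein deformation equivalent to $T^\ast\Sigma_g\times\mathbb{C}$. This shows up exactly at the Chern class computation you postponed.

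The vertical tangent bundle of the torsor $q\colon S_\alpha\to C_g$ is $q^\ast\mathit{TC}_g$, so $\mathit{TS}_\alpha$ is an extension of $q^\ast\mathit{TC}_g$ by itself. Hence $c_1(S_\alpha\times\mathbb{C})=c_1(S_\alpha)=2q^\ast c_1(\mathit{TC}_g)$, which is $4-4g\neq0$ in $H^2(S_\alpha;\mathbb{Z})\cong\mathbb{Z}$. By contrast, $c_1(T^\ast\Sigma_g\times\mathbb{C})=0$.

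Since $c_1$ is natural under diffeomorphisms and unchanged by homotopies of almost complex structures, no diffeomorphism can make the two almost complex structures homotopic. So the Cieliebak--Eliashberg $h$-principle has nothing to act on. The extra factor $\mathbb{C}$ only repairs the orientation mismatch between $S_\alpha$ and $T^\ast\Sigma_g$; it does not touch $c_1$.

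Your fallback does not help either. The dual extension again produces a $\mathit{TC}_g$-torsor. More generally, any $\mathbb{A}^1$-bundle over $C_g$ is a torsor under some line bundle $L$, affineness forces $\deg L<0$, and then $c_1=(2-2g)+\deg L\neq0$. Incidentally, your worry about the sign in the affineness step is unfounded: the section at infinity has normal bundle $K_{C_g}$, so its self-intersection is $2g-2>0$.

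\textbf{The missing idea is to twist the extra line by the canonical bundle,} i.e.\ to take $X_{g,\alpha}:=\mathit{Tot}(K_{S_\alpha})$.
\begin{itemize}
\item \emph{Vanishing $c_1$.} $K_{X_{g,\alpha}}\cong\pi^\ast(K_{S_\alpha}\otimes K_{S_\alpha}^{-1})$ is trivial, so $c_1=0$.
\item \emph{Correct diffeomorphism type.} Because $K_{S_\alpha}\cong q^\ast K_{C_g}^{\otimes2}$, the manifold is smoothly $\mathit{Tot}(\mathit{TC}_g\oplus K_{C_g}^{\otimes2})$. This rank-four real bundle has $c_1=2g-2$, which is even, so $w_2=0$ and the bundle is trivial. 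Thus $X_{g,\alpha}\cong\Sigma_g\times\mathbb{R}^4\cong T^\ast\Sigma_g\times\mathbb{C}$, orientation-preservingly.
\item \emph{Homotopy and flexibility.} The relation $c_1(J_1)-c_1(J_0)=2a(J_0,J_1)$ in the torsion-free group $H^2(\Sigma_g;\mathbb{Z})$ kills the only obstruction to homotopy. The exhaustion $\psi+\|\iota\|^2$ has indices $\le 2$, so the structure is subcritical and Cieliebak--Eliashberg applies.
\end{itemize}

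This choice also removes the obstacle you flag at the end. The coordinate ring $A_\alpha=\bigoplus_{n\ge0}H^0(S_\alpha,q^\ast K_{C_g}^{\otimes-2n})$ is graded with $A_\alpha^0=B_\alpha$. One can therefore reduce to a homogeneous locally nilpotent derivation of degree $k\ge0$. Its restriction to $B_\alpha$ is a section of $q^\ast K_{C_g}^{\otimes-(2k+1)}$ pulled back from $C_g$, so it vanishes for $g\ge2$. Hence $\mathit{ML}(A_\alpha)=B_\alpha$ recovers $S_\alpha$. A torsor isomorphism covering an automorphism of $C_g$ then pins down $[\alpha]$ up to scaling and $\mathrm{Aut}(C_g)$.

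Your recovery of $C_g$ from the constancy of maps from affine spaces to $C_g$ goes through unchanged.
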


Since $\mathrm{Aut}(C_g)$ is finite for every curve $C_g$ with $g\geq2$, our theorem implies the existence of a $(6g-7)$-dimensional complex orbifold $\mathbb{P}(T\mathcal{M}_g)$, i.e. the total space of the projectivized tangent bundle of the moduli stack of smooth genus $g$ curves, that embeds in the coarse moduli space of affine algebraic structures on $T^\ast\Sigma_g\times\mathbb{C}$ with its standard Stein homotopy type.

\begin{corollary}\label{corollary:uncoun}
For any $g\geq2$, there exist uncountably many non-isomorphic smooth affine threefolds that are Stein deformation equivalent to $T^\ast\Sigma_g\times\mathbb{C}$.
\end{corollary}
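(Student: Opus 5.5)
Corollary~\ref{corollary:uncoun} is a direct consequence of Theorem~\ref{theorem:main}, once we count the isomorphism classes that the theorem produces. Fix $g\geq2$. By Riemann--Roch, or equivalently Serre duality $H^1(C_g,\mathit{TC}_g)\cong H^0(C_g,K_{C_g}^{\otimes2})^\vee$, the space $H^1(C_g,\mathit{TC}_g)$ has complex dimension $3g-3\geq3$. In particular it contains nonzero classes $\alpha$. The moduli space $\mathcal{M}_g$ has positive dimension $3g-3$, so it contains uncountably many pairwise non-isomorphic curves $C_g$.

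For each isomorphism class of curve, choose one representative $C_g$ and one nonzero $\alpha\in H^1(C_g,\mathit{TC}_g)$. Theorem~\ref{theorem:main} then gives a smooth affine threefold $X_{g,\alpha}$ that is Stein deformation equivalent to $T^\ast\Sigma_g\times\mathbb{C}$. The second part of the theorem says that non-isomorphic curves give non-isomorphic threefolds, whatever $\alpha$ is chosen. So the map from the uncountable set $\mathcal{M}_g$ to isomorphism classes of affine threefolds is injective, and the corollary follows.

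One can get more, even for a single curve. The group $\mathrm{Aut}(C_g)$ is finite (Hurwitz), so it has finitely many orbits on each point's neighbourhood, and countably many orbits cannot cover the uncountable space $\mathbb{P}(H^1(C_g,\mathit{TC}_g))\cong\mathbb{P}^{3g-4}$, which has positive dimension because $3g-4\geq2$. The orbit criterion in the theorem then already yields uncountably many non-isomorphic $X_{g,\alpha}$ for one fixed curve. Either argument suffices.

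All the difficulty sits inside Theorem~\ref{theorem:main}. The only things checked here are the dimension counts and the finiteness of $\mathrm{Aut}(C_g)$, both of which are standard.
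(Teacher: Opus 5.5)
Your proposal is correct and takes the same route as the paper: the paper also derives the corollary directly from Theorem~\ref{theorem:main}, noting that the parameter space $\mathbb{P}(T\mathcal{M}_g)$ injects into isomorphism classes of such threefolds (curves in $\mathcal{M}_g$, and for each curve the quotient of $\mathbb{P}(H^1(C_g,\mathit{TC}_g))$ by the finite group $\mathrm{Aut}(C_g)$). One wording slip in your second argument: a finite group does not have ``finitely many orbits on each point's neighbourhood''; what you actually use is that every $\mathrm{Aut}(C_g)$-orbit is finite, so countably many orbits make up only a countable set.
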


It is well-known that $T^\ast S^2$ and $T^\ast T^2$ are affine surfaces in $\mathbb{C}^3$. For $g\geq2$, whether $T^\ast\Sigma_g\times\mathbb{C}$ is symplectically equivalent to a smooth affine threefold less clear due to the works of McLean \cite{mmt} and Totaro \cite{bta}, who proved that $T^\ast\Sigma_g$ is not symplectomorphic to any smooth affine surface. In fact, Totaro also proved that there is no orientation-preserving diffeomorphism from $T^\ast\Sigma_g$ to any smooth affine surface. In this sense, the conclusion of Corollary \ref{corollary:uncoun} is not trivial. However, the phenomenon that the same Stein homotopy class can support uncountably many non-isomorphic affine algebraic structures is not new: for any $n\geq2$, Jelonek \cite{zjs} constructed an $n$-dimensional Stein manifold $X$ which has uncountably many different structures as affine varieties. In his case, the spaces parametrizing the affine algebraic structures on $X$ are sometines finite group quotients of Jacobian varieties. In fact, our construction is in some sense ``orthogonal" to Jelonek's construction: we exploited the flexibility of varying the Stein structure in the same Stein homotopy class to produce uncountably many non-isomorphic affine algebraic structures, while his construction was done for a fixed Stein structure. One can therefore try to combine these two ``mutually orthogonal" constructions to get a larger family inside the coarse moduli space of affine algebraic structures on $T^\ast\Sigma_g\times\mathbb{C}$, see our discussions in Section \ref{section:var}. In particular, the family of affine threefolds $X_{g,\alpha}$ in Theorem \ref{theorem:main} is far from exhaustive as affine threefolds Stein deformation equivalent to $T^\ast\Sigma_g\times\mathbb{C}$.

On the other hand, it is not true that any Stein threefold of the form $M\times\mathbb{C}$, where $M$ is a (non-affine) Stein surface, is symplectically equivalent to a smooth affine $3$-fold. To see this, take a $5$-cycle $\Gamma_5$ and consider the associated Artin group
\begin{equation}
A_{\Gamma_5}=\left\langle x_1,\cdots,x_5|[x_i,x_{i+1}]=1\textrm{ for }i\in\mathbb{Z}_5\right\rangle.
\end{equation}
Attaching five $1$-handles to the closed ball we arrive at a $4$-manifold with boundary diffeomorphic to the $5$-fold connected sum $\#^5(S^1\times S^2)$. Picking a link $\Lambda\subset\#^5(S^1\times S^2)$ with five connected components which represent the commutators $[x_i,x_{i+1}]$ $i\in\mathbb{Z}_5$, respectively, attaching $2$-handles to $\Lambda$ we arrive at a $4$-manifold $M$ with $\pi_1(M)=\pi_1(M\times\mathbb{C})=A_{\Gamma_5}$. By Gompf's characterization of Stein surfaces \cite{rgh}, $M$ can be taken to be Stein. However, it is proved in \cite{dpst}, Theorem 11.7 that the Artin group $A_{\Gamma_5}$ cannot be the fundamental group of any quasi-affine variety since $\Gamma_5$ is not a multipartite graph, it then follows that $M\times\mathbb{C}$ cannot be diffeomorphic to any smooth affine threefold.

We end the introduction by sketching the proof of Theorem \ref{theorem:main}. For the Stein deformation equivalence between $X_{g,\alpha}$ and $T^\ast\Sigma_g\times\mathbb{C}$, the idea of the proof is simple and well-known: $h$-principle is available in this dimension to provide homotopies between Stein structures. See Theorem \ref{theorem:h} below for the precise statement. As long as one can find an affine threefold $X_{g,\alpha}$ that is diffeomorphic to $T^\ast\Sigma_g\times\mathbb{C}$ as an oriented $6$-manifold, with $c_1(X_{g,\alpha})=0$ and the induced Stein structure is subcritical, the Stein deformation equivalence will follow from the $h$-principle. In Section \ref{section:construction}, we construct the affine threefolds $X_{g,\alpha}$ and show that they have vanishing first Chern class. In Section \ref{section:ACS} we explain why the vanishing of $c_1(X_{g,\alpha})$ implies the homotopy between the complex structures on $X_{g,\alpha}$ and $T^\ast\Sigma_g\times\mathbb{C}$ in the space of almost complex structures on the underlying oriented $6$-manifold. In Section \ref{section:h}, we show the Stein structure on $X_{g,\alpha}$ is subcritical so that $h$-principle is applicable to prove the existence of a Stein homotopy. The analysis of the dependence of the algebraic structures of $X_{g,\alpha}$ on the choices of $\alpha\in H^1(C,\mathit{TC})$ is the most technical part of this paper, and this will be achieved using results from affine algebraic geometry, especially the Makar--Limanov invariant in Section \ref{section:dependence}.

\section*{Acknowledgements}
Whether $T^\ast\Sigma_g\times\mathbb{C}$ are smooth affine varieties for $g\geq2$ was asked by Ivan Smith during my visit at Cambridge University in October, 2019, therefore the question is attributed to him. He also informed me about the paper of Totaro \cite{bta}. The construction of the affine threefolds $X_{g,\alpha}$ in Section \ref{section:construction} and their dependence on the choice of $\alpha$ in Section \ref{section:dependence}, specifically the proof of Lemma \ref{lemma:ML} are accomplished with the help of GPT-5.6 Sol.

\section{Construction of the affine threefolds}\label{section:construction}

For a fixed algebraic curve $C$, pick any $0\neq\alpha\in H^1(C,\mathit{TC})$, there is a short exact sequence
\begin{equation}
0\rightarrow\mathit{TC}\rightarrow E_\alpha\xrightarrow{p}\mathcal{O}_{C}\rightarrow0,
\end{equation}
where $E_\alpha$ is a rank $2$ bundle over $C$. Consider the $\mathit{TC}$-torsor
\begin{equation}
S_\alpha:=p^{-1}(1).
\end{equation}

\begin{proposition}
When the genus of the curve $g(C)\geq2$, $S_\alpha$ is a smooth affine surface.
\end{proposition}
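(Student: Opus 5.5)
Smoothness is immediate. $S_\alpha=p^{-1}(1)$ is the preimage of a nowhere-vanishing section under the smooth surjective bundle map $p$. Locally over $U\subset C$ we can trivialize $E_\alpha|_U\cong \mathit{TC}|_U\oplus\mathcal{O}_U$, and then $S_\alpha|_U\cong \mathit{TC}|_U$, which is a line bundle over $U$. So $S_\alpha$ is a smooth quasi-projective surface; it is an affine-line bundle (a $\mathit{TC}$-torsor) over $C$. The real content of the statement is affineness, and the plan is to realize $S_\alpha$ as the complement of an ample divisor in a projective surface.

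\textbf{Compactify.} Set $P=\mathbb{P}(E_\alpha)$, the projective bundle of lines in $E_\alpha$, with projection $\pi\colon P\to C$. The sub-line-bundle $\mathit{TC}\subset E_\alpha$ determines a section $D\subset P$. The map $v\mapsto [v]$ identifies $S_\alpha$ with $P\setminus D$: a line in $E_{\alpha,x}$ not contained in $\mathit{TC}_x$ meets $p^{-1}(1)$ in exactly one point. It therefore suffices to show that $D$ is an ample divisor on the ruled surface $P$. Then $P\setminus D$ is affine, since it is the complement of the support of a hyperplane section of some multiple $mD$.

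\textbf{Ampleness of $D$.} I will use the Nakai--Moishezon criterion on the ruled surface $P$, whose Néron--Severi group is generated by $D$ and a fiber $f$.
\begin{itemize}
\item $D\cdot f=1>0$.
\item $D^2=\deg N_{D/P}$. For the section given by the sub-line bundle $L=\mathit{TC}$ with quotient $Q=\mathcal{O}_C$, the normal bundle is $\mathcal{H}om(L,Q)=Q\otimes L^{-1}=K_C$. Hence $D^2=2g-2>0$ when $g\geq2$.
\item For an irreducible curve $\Gamma\neq D$ we have $D\cdot\Gamma\ge 0$ automatically, so what is needed is $D\cdot\Gamma>0$. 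If $\Gamma$ is a fiber this holds, so assume $\Gamma$ is horizontal of degree $d$ over $C$.
\end{itemize}

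\textbf{Main obstacle.} The hard step is ruling out a horizontal curve $\Gamma$ disjoint from $D$; this is exactly where $\alpha\neq0$ must be used. Suppose such a $\Gamma$ exists. Then $\Gamma\subset S_\alpha$ is a multisection of the $\mathit{TC}$-torsor, finite of degree $d$ over $C$ via $\nu\colon\tilde\Gamma\to C$ on the normalization. Averaging the $d$ points of $\Gamma$ in each fiber uses the affine structure of the torsor fibers; this is a trace construction, valid in characteristic $0$. It produces a regular section of $S_\alpha\to C$. But a regular section exists if and only if the torsor class $\alpha\in H^1(C,\mathit{TC})$ vanishes, which contradicts $\alpha\neq0$. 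Hence every horizontal $\Gamma$ meets $D$, and $D\cdot\Gamma>0$.

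Should the averaging argument need care at branch points, I would instead pull back along $\nu$. The class $\nu^*\alpha\in H^1(\tilde\Gamma,\nu^*\mathit{TC})$ vanishes because the pulled-back torsor has a section. Since $\mathrm{tr}\circ\nu^*=d$, this again gives $d\alpha=0$ and so $\alpha=0$. With all three conditions verified, Nakai--Moishezon gives that $D$ is ample, and $S_\alpha=P\setminus D$ is a smooth affine surface.
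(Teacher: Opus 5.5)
Your proof is correct, but it takes a different route from the paper. The paper settles affineness in one line: it cites a slope criterion for affineness of torsors under vector bundles (Proposition 2.1(ii) of its reference) and notes that $\deg \mathit{TC}=2-2g<0$.

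You give a self-contained argument instead:
\begin{itemize}
\item you realize $S_\alpha$ as $\mathbb{P}(E_\alpha)\setminus D$;
\item you compute $D^2=\deg \mathcal{H}om(\mathit{TC},\mathcal{O}_C)=2g-2>0$;
\item you check Nakai--Moishezon by showing that a horizontal curve missing $D$ would be a multisection $\nu\colon\tilde\Gamma\to C$ of the torsor. This forces $\nu^\ast\alpha=0$, and the identity $\mathrm{tr}\circ\nu^\ast=d$ then gives $\alpha=0$.
\end{itemize}
The trace version is the cleaner of your two arguments. The barycenter version also works: locally the barycenter equals $-a_{d-1}/d$ for the monic equation of $\Gamma$, and it shifts by exactly the transition cocycle.

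Your route makes the hypotheses transparent. The condition $g\geq 2$ enters only through $D^2>0$. The condition $\alpha\neq 0$ enters only through the absence of multisections. The paper's proof leaves this second point implicit, yet it is genuinely needed: for $\alpha=0$, $S_\alpha=\mathit{Tot}(\mathit{TC})$ contains the complete zero section and is not affine, so negativity of the slope alone cannot suffice. You also get an explicit projective compactification with ample boundary.

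The paper's citation, for its part, is shorter and applies to torsors under bundles of arbitrary rank. Your Nakai--Moishezon computation is specific to the line-bundle (ruled surface) case and would not carry over directly.
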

\begin{proof}
Smoothness follows immediately from the definition. To see that $S_\alpha$ is affine, we apply the slope criterion for the affineness of torsors in \cite{hbt}, Proposition 2.1, (ii). In our case the slope of the tangent sheaf $\mathit{TC}$ is given by $2-2g<0$, therefore the assumption $g(C)\geq2$ ensures that $S_\alpha$ is an affine surface.
\end{proof}

\begin{example}
Take $C\subset\mathbb{P}^2$ to be the Fermat quartic defined by the equation $x^4+y^4+z^4=0$, where $[x:y:z]$ are homogeneous coordinates on $\mathbb{P}^2$. There is a short exact sequence
\begin{equation}
0\rightarrow\mathcal{O}_{C}(-1)\xrightarrow{i}\mathcal{O}_{C}^{\oplus2}\xrightarrow{\pi}\mathcal{O}_{C}(1)\rightarrow0,
\end{equation}
where $i(s)=(-ys,xs)$ and $\pi(u,v)=xu+yv$. Denote by $\delta$ the connecting map in the corresponding long exact sequence, then a natural choice of $\alpha$ is given by
\begin{equation}
\alpha=\delta(-z)\in H^1\left(C,\mathcal{O}_{C}(-1)\right)=H^1(C,\mathit{TC}).
\end{equation}
The fact that $\alpha\neq0$ follows from the observation that $z$ does not lie in the image of $H^0\left(C,\mathcal{O}_{C}^{\oplus2}\right)\rightarrow H^0\left(C,\mathcal{O}_{C}(1)\right)$, which is spanned by $x$ and $y$. The extension $E_\alpha$ can be identified with the fiber product $\mathcal{O}_{C}^{\oplus2}\times_{\mathcal{O}_{C}(1)}\mathcal{O}_{C}$. Over an open subset $U\subset C$, a section of $E_\alpha$ can be written as a triple $(u,v,t)$ with $xu+yv+zt=0$, and the map $p:E_\alpha\rightarrow\mathcal{O}_{C}$ is given by the projection to $t$, therefore the $\mathit{TC}$-torsor is
\begin{equation}
S_\alpha=\left.\left\{([x:y:z],u,v)\in C\times\mathbb{C}^2\right\vert xu+yv+z=0\right\}.
\end{equation}
It is then clear that $S_\alpha$ can be embedded as an affine surface in $\mathbb{C}^5$.
\end{example}

Define
\begin{equation}
X_{\alpha}:=\mathit{Tot}(K_{S_\alpha})
\end{equation}
to be the total space of the canonical bundle over the affine surface $S_\alpha$. Since the total space of any algebraic vector bundle over an affine variety is affine, we conclude that $X_\alpha$ is an affine threefold. To indicate its independence on the genus $g$ of the algebraic curve $C$ fixed at the beginning of the construction, we shall also write it as $X_{g,\alpha}$.

We establish some basic properties of the affine threefolds $X_{g,\alpha}$ that will be used for later purposes.

\begin{lemma}\label{lemma:K}
$K_{X_{g,\alpha}}\cong\mathcal{O}_{X_{g,\alpha}}$.
\end{lemma}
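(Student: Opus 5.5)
The plan is to use the standard fact that the total space of the canonical bundle of any smooth variety is Calabi--Yau, i.e.\ has trivial canonical bundle. Write $\pi:X_{g,\alpha}=\mathit{Tot}(K_{S_\alpha})\to S_\alpha$ for the bundle projection. The first step is the relative tangent sequence of the vector bundle $\pi$:
\begin{equation}
0\rightarrow\pi^\ast K_{S_\alpha}\rightarrow TX_{g,\alpha}\rightarrow\pi^\ast TS_\alpha\rightarrow0.
\end{equation}
Here the vertical tangent bundle of the total space of a line bundle $L$ is canonically $\pi^\ast L$, which is an algebraic identification. Taking determinants of the cotangent version of this sequence gives
\begin{equation}
K_{X_{g,\alpha}}\cong\pi^\ast K_{S_\alpha}\otimes\pi^\ast K_{S_\alpha}^{-1}\cong\mathcal{O}_{X_{g,\alpha}}.
\end{equation}
The cotangent sequence is $0\to\pi^\ast\Omega_{S_\alpha}\to\Omega_{X_{g,\alpha}}\to\pi^\ast K_{S_\alpha}^{-1}\to0$, so the determinant is $\pi^\ast(K_{S_\alpha}\otimes K_{S_\alpha}^{-1})$.

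To make the isomorphism explicit, and to avoid worrying about the canonicity of the splitting, I would also write down a nowhere vanishing algebraic $3$-form. The total space of $K_{S_\alpha}$ carries a tautological $2$-form $\lambda$, defined at a point $\xi\in K_{S_\alpha,s}$ by $\lambda_\xi=\pi^\ast\xi$. In local coordinates $(x_1,x_2)$ on $S_\alpha$ with fiber coordinate $t$ (so that $\xi=t\,dx_1\wedge dx_2$), we have $\lambda=t\,dx_1\wedge dx_2$. Then
\begin{equation}
\Omega:=d\lambda=dt\wedge dx_1\wedge dx_2
\end{equation}
is a global algebraic $3$-form on $X_{g,\alpha}$ which vanishes nowhere. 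It therefore trivializes $K_{X_{g,\alpha}}$.

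The only point requiring care, and the closest thing to an obstacle, is checking that $\lambda$ is well defined under changes of local coordinates. If $dx_1'\wedge dx_2'=J\,dx_1\wedge dx_2$, then the fiber coordinate transforms as $t=t'J$, so $t'\,dx_1'\wedge dx_2'=t\,dx_1\wedge dx_2$. Hence $\lambda$, and therefore $\Omega$, glue to global objects. This is a routine verification. Note that the argument uses nothing specific about $S_\alpha$ beyond its smoothness, which was established in the proposition above.
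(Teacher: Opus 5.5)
Your proposal is correct and follows the same route as the paper: identify the vertical tangent bundle of $\pi:X_{g,\alpha}\to S_\alpha$ with $\pi^\ast K_{S_\alpha}$, then take determinants in the relative (co)tangent sequence to get $K_{X_{g,\alpha}}\cong\pi^\ast(K_{S_\alpha}\otimes K_{S_\alpha}^{-1})$. The explicit nowhere-vanishing form $d\lambda=dt\wedge dx_1\wedge dx_2$, built from the tautological $2$-form, is a correct supplement that the paper omits, and it makes the trivialization concrete.
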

\begin{proof}
Denote by $\pi:X_{g,\alpha}\rightarrow S_\alpha$ the bundle projection, we have an isomorphism $T_{X_{g,\alpha}/S_\alpha}\cong\pi^\ast K_{S_\alpha}$, where $T_{X_{g,\alpha}/S_\alpha}$ is the relative tangent bundle. Thus
\begin{equation}
K_{X_{g,\alpha}}\cong\pi^\ast(K_{S_\alpha}\otimes K_{S_\alpha}^{-1})\cong\mathcal{O}_{X_{g,\alpha}}.
\end{equation}
\end{proof}

\begin{lemma}\label{lemma:diff}
There is an orientation-preserving diffeomorphism $X_{g,\alpha}\cong T^\ast\Sigma_g\times\mathbb{C}$.
\end{lemma}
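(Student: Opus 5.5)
We prove Lemma \ref{lemma:diff} by peeling off the two affine-bundle layers and reducing everything to a vector bundle over $\Sigma_g$, which we then identify using Chern classes.

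\textbf{Step 1: $S_\alpha$ as a smooth vector bundle.} The projection $S_\alpha\to C$ is a Zariski-locally trivial affine bundle modelled on the line bundle $\mathit{TC}$. In the smooth category the fibres are contractible, so there is a smooth section $\sigma$; indeed $H^1(C,\mathcal{C}^\infty(\mathit{TC}))=0$, which is exactly why the holomorphic obstruction $\alpha$ disappears smoothly. Subtracting $\sigma$ gives a diffeomorphism $S_\alpha\cong\mathit{Tot}(\mathit{TC})$ as smooth fibre bundles over $C$. This identification respects the fibrewise complex-affine structure, so it is orientation-preserving, and it is homotopic through bundle maps to the identity on the zero section.

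\textbf{Step 2: $X_{g,\alpha}$ as a bundle over $C$.} Since $S_\alpha$ deformation retracts onto $C$ via the fibre contraction, $X_{g,\alpha}=\mathit{Tot}(K_{S_\alpha})$ is diffeomorphic to the total space of the rank $3$ complex vector bundle $\mathit{TC}\oplus K_{S_\alpha}|_C$ over $C$. To justify this, pull back along the bundle retraction: a complex line bundle over $\mathit{Tot}(\mathit{TC})$ is isomorphic to the pullback of its restriction to the zero section. Next I compute $c_1(K_{S_\alpha}|_C)$. The tangent bundle of $S_\alpha$ fits into $0\to\pi^\ast\mathit{TC}\to T S_\alpha\to\pi^\ast\mathit{TC}\to0$, with the vertical tangent bundle of a $\mathit{TC}$-torsor equal to $\pi^\ast\mathit{TC}$. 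Hence $K_{S_\alpha}\cong\pi^\ast K_C^{2}$ topologically, of degree $4g-4$. So $X_{g,\alpha}$ is diffeomorphic to the total space of $L_1\oplus L_2$ over $\Sigma_g$, where $\deg L_1=2-2g$ and $\deg L_2=4g-4$. On the other side, $T^\ast\Sigma_g\times\mathbb{C}$ is the total space of $K_C\oplus\mathcal{O}$, with degrees $2g-2$ and $0$.

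\textbf{Step 3: compare the rank $2$ bundles.} Complex rank $2$ bundles over a real surface are classified by $c_1$. Here $c_1(L_1\oplus L_2)=2g-2=c_1(K_C\oplus\mathcal{O})$, so the two bundles are isomorphic as complex vector bundles. Their total spaces are therefore diffeomorphic, with orientations induced from the complex structures on fibre and base. This gives an orientation-preserving diffeomorphism onto $T^\ast\Sigma_g\times\mathbb{C}$ with its standard complex orientation.

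\textbf{Expected obstacle.} The main difficulty is orientation bookkeeping. Composing the identifications, each map is fibrewise complex-linear or affine over an orientation-preserving base map, so orientations are preserved. One point needs care: the identification $T^\ast\Sigma_g\cong\mathit{Tot}(K_C)$ should use the complex orientation. If the paper's standard orientation is the symplectic one, this agrees with the complex orientation of $T^\ast C$, since $\omega$ is the Kähler-type form $\mathrm{Re}\,dz\wedge dp$ up to sign conventions. This is the point I would check explicitly. As a consistency check, $c_1(X_{g,\alpha})=0$ is compatible with Lemma \ref{lemma:K}: the degrees sum to $(2-2g)+(2-2g)+(4g-4)=0$.
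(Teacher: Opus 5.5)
Your proof is correct. Through the reduction of $X_{g,\alpha}$ to the total space of $\mathit{TC}\oplus K_C^{\otimes2}$ over $C$ it is the paper's argument. Your homotopy-invariance justification of that reduction is more explicit than the paper's. Note that this bundle has complex rank $2$, not $3$ as written in your Step 2.

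The routes differ only in the final classification step. The paper forgets the complex structure and notes $w_2=c_1(K_C)\bmod 2=0$. It concludes that the oriented real rank-$4$ bundle is trivial, so $X_{g,\alpha}\cong\Sigma_g\times\mathbb{R}^4$, and separately identifies $T^\ast\Sigma_g\times\mathbb{C}$ with $\Sigma_g\times\mathbb{R}^4$ by stable parallelizability. This needs only a mod $2$ computation, but the paper's orientation claim is asserted without justification.

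You keep the complex structure and match $\mathit{TC}\oplus K_C^{\otimes2}$ directly with $K_C\oplus\mathcal{O}$ via $c_1=2g-2$. This gives a fibrewise complex-linear isomorphism, so orientations are automatic. The price is that you must identify $T^\ast\Sigma_g$ with $\mathit{Tot}(K_C)$ compatibly with orientations. That check does go through:
\begin{itemize}
\item In real dimension $4$ the sign convention for $\omega$ is irrelevant, since $\omega^2=(-\omega)^2$.
\item $(\mathrm{Re}(dp\wedge dz))^2=\tfrac12\,dp\wedge dz\wedge d\bar p\wedge d\bar z$ is positive for the complex orientation.
\item Equivalently, both zero sections have self-intersection $2g-2$; for the Lagrangian one this is $-\chi(\Sigma_g)$.
\end{itemize}

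Finally, the orientation issue you flag as the main obstacle is not really one for either proof. The map $(x,z)\mapsto(x,\bar z)$ is an orientation-reversing self-diffeomorphism of $T^\ast\Sigma_g\times\mathbb{C}$. So any diffeomorphism $X_{g,\alpha}\cong T^\ast\Sigma_g\times\mathbb{C}$ can be made orientation-preserving by composing with it.
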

\begin{proof}
As an affine vector bundle, the torsor $q:S_\alpha\rightarrow C$ admits a smooth section, so $S_\alpha$ is diffeomorphic to the total space of the tangent bundle $\mathit{TC}$. Observe that the relative tangent sequence for $q:S_\alpha\rightarrow C$ is given by
\begin{equation}
0\rightarrow q^\ast\mathit{TC}\rightarrow\mathit{TS}_\alpha\rightarrow q^\ast\mathit{TC}\rightarrow0,
\end{equation}
where we have used the isomorphism $T_{S_\alpha/C}\cong q^\ast\mathit{TC}$, from which one obtains the isomorphism $K_{S_\alpha}\cong q^\ast K_C^{\otimes2}$. It follows that we have the diffeomorphism
\begin{equation}
X_{g,\alpha}\cong\mathit{Tot}(\mathit{TC}\oplus K_C^{\otimes2}).
\end{equation}
To show that the oriented real rank $4$ bundle $\mathit{TC}\oplus K_C^{\otimes2}$ is smoothly trivial over $C$, we calculate its second Stiefel--Whitney class. By the splitting principle,
\begin{equation}
w_2(\mathit{TC}\oplus K_C^{\otimes2})=c_1(\mathit{TC})+2c_1(K_C)=c_1(K_C)\equiv0\textrm{ }(\textrm{mod }2).
\end{equation}
It follows that we have the diffeomorphisms
\begin{equation}\label{eq:diff}
X_{g,\alpha}\cong\Sigma_g\times\mathbb{R}^4\cong T^\ast\Sigma_g\times\mathbb{C},
\end{equation}
where the second diffeomorphism follows from the fact that $\Sigma_g$ is stably parallelizable. Since both of the diffeomorphisms in (\ref{eq:diff}) are orientation-preserving, so is their composition.
\end{proof}

\begin{remark}
From the diffeomorphism $S_\alpha\cong\mathit{Tot}(\mathit{TC})$, it is easy to see that there exists an orientation-reversing diffeomorphism between the affine surface $S_\alpha$ and $T^\ast\Sigma_g$. Note that this does not contradict the result of Totaro \cite{bta}. The additional $\mathbb{C}$-factor will then allow us to erase the orientation discrepancy and obtain an oriented diffeomorphism between $S_\alpha\times\mathbb{C}$ and $T^\ast\Sigma_g\times\mathbb{C}$. However, in order to prove that the Stein threefold $T\Sigma_g\times\mathbb{C}$ is symplectically equivalent to a smooth affine variety, we cannot use $S_\alpha\times\mathbb{C}$ in place of the affine threefold $X_{g,\alpha}$: the key difference is that $S_\alpha\times\mathbb{C}$ does not have vanishing first Chern class, since $c_1(S_\alpha\times\mathbb{C})=c_1(S_\alpha)=2q^\ast c_1(C)\neq0$. The vanishing of the first Chern class will be crucial for proving the homotopy between almost complex structures in the next section.
\end{remark}

\section{The space of almost complex structures}\label{section:ACS}

Let $Y\cong\Sigma_g\times\mathbb{R}^4$ be the underlying smooth oriented manifold of both the affine threefold $X_{g,\alpha}$ constructed in Section \ref{section:construction} and the Stein threefold $T^\ast\Sigma_g\times\mathbb{C}$. Denote by $J_0$ the complex structure on $Y$ coming from the standard product Stein structure on $T^\ast\Sigma_g\times\mathbb{C}$, and by $J_1$ the complex structure defined by pushing forward the complex sturcture $J_X$ on $X_{g,\alpha}$ under the diffeomorphism $\Phi:X_{g,\alpha}\rightarrow Y$ established in Lemma \ref{lemma:diff}, i.e. $J_1=\Phi_\ast J_X$. In this section we prove the following.

\begin{proposition}\label{proposition:h}
$J_0$ and $J_1$ are homotopic as almost complex structures on $Y$.
\end{proposition}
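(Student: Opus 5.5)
The plan is to use obstruction theory. Since $Y\simeq\Sigma_g$ homotopy-retracts onto a $2$-dimensional CW complex, almost complex structures on $Y$ compatible with the given orientation are sections of the bundle of fibers $SO(6)/U(3)\cong\mathbb{CP}^3$ associated to the oriented frame bundle. Homotopy classes of such structures are therefore homotopy classes of sections over the $2$-skeleton. We need $\pi_1(SO(6)/U(3))=0$ and $\pi_2(SO(6)/U(3))\cong\mathbb{Z}$. Hence the only obstruction to homotoping $J_0$ to $J_1$ is the primary difference class $d(J_0,J_1)\in H^2(Y;\mathbb{Z})\cong\mathbb{Z}$, and there are no higher obstructions because $H^k(Y)=0$ for $k\geq3$.

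Next we identify this difference class via Chern classes. For almost complex structures on a fixed oriented real vector bundle of rank $6$, the standard relation is $c_1(J_1)-c_1(J_0)=2\,d(J_0,J_1)$. Up to sign, this follows from the generator of $\pi_2(\mathbb{CP}^3)$ changing $c_1$ by $\pm2$; it matches the fact that $c_1$ mod $2$ equals $w_2$, which is fixed. Since $H^2(Y;\mathbb{Z})\cong\mathbb{Z}$ is torsion-free, it suffices to show $c_1(J_0)=c_1(J_1)$.

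We then compute the two Chern classes. First, $c_1(J_0)=c_1(T^\ast\Sigma_g\times\mathbb{C})=0$, because $T(T^\ast\Sigma)$ is $\pi^\ast(T\Sigma\otimes\mathbb{C})$ with a complex-symplectic splitting and $c_1$ of a complexified real bundle is $2$-torsion, hence zero here. Alternatively, the standard Stein structure on a cotangent bundle has trivial canonical bundle (Weinstein structure with vanishing $c_1$). Second, $c_1(J_1)=\Phi_\ast c_1(X_{g,\alpha})=-c_1(K_{X_{g,\alpha}})=0$ by Lemma \ref{lemma:K}. Therefore $d(J_0,J_1)=0$, and $J_0$ and $J_1$ are homotopic over the $2$-skeleton. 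Extending over $Y$ is automatic because $Y$ deformation retracts onto $\Sigma_g$ and homotopies of sections extend along a retraction by the homotopy lifting property of the fiber bundle.

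The main obstacle is to make the relation $c_1(J_1)-c_1(J_0)=2d$ precise (with the correct sign and normalization) and to check that both structures induce the same orientation. The orientation is guaranteed by Lemma \ref{lemma:diff}, since $\Phi$ is orientation-preserving. Without that, $J_1$ might lie in the other component $O(6)/U(3)$. I would first try to prove the relation by reducing over each $2$-cell to a clutching computation: the map $\pi_2(SO(6)/U(3))\to\pi_1(U(3))\cong\mathbb{Z}$ from the fibration $U(3)\to SO(6)\to SO(6)/U(3)$ is multiplication by $2$ on the image generator, because $\pi_1(U(3))\to\pi_1(SO(6))=\mathbb{Z}/2$ is surjective.
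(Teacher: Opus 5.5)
Your proposal is correct and follows essentially the same route as the paper. Both arguments reduce, by obstruction theory over $Y\simeq\Sigma_g$ with simply connected fiber $\mathbb{P}^3$, to a single difference class in $H^2(Y;\mathbb{Z})\cong\mathbb{Z}$, then use $c_1(J_1)-c_1(J_0)=2\,d(J_0,J_1)$, the vanishing of both Chern classes (Lemma \ref{lemma:K}), and torsion-freeness to kill that class. The differences are cosmetic: you work with sections of the twistor bundle rather than trivializing it via parallelizability, and you justify the factor of $2$ through the exact sequence of $U(3)\to SO(6)\to SO(6)/U(3)$ instead of citing the $\mathit{Spin}^c$ argument from \cite{gmt}.
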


Denote by $\mathcal{J}(Y)$ the space of all almost complex structures on $Y$ inducing the given orientation. Fix an auxiliary Riemannian metric on $Y$, which allows us to talk about the space of all orthogonal almost complex structures on $Y$ inducing the given orientation, which we denote by $\mathcal{J}^\perp(Y)$. It is well-known that the natural inclusion $\mathcal{J}^\perp(Y)\subset\mathcal{J}(Y)$ is a homotopy equivalence, see for example \cite{gmt}. The latter space has the advantage that it is the space of sections of the so-called \textit{positive twistor bundle}
\begin{equation}
Z_+(Y):=\mathit{Fr}(\mathit{TY})\otimes_{\mathit{SO}(6)}\mathit{SO}(6)/U(3),
\end{equation}
where $\mathit{Fr}(\mathit{TY})$ is the oriented orthonormal frame bundle of $\mathit{TY}$. Since $\mathit{SO}(6)/U(3)\cong\mathbb{P}^3$, $Z_+(Y)$ is a $\mathbb{P}^3$-bundle over $Y$.

Using the homotopy equivalence $\mathcal{J}^\perp(Y)\simeq\mathcal{J}(Y)$, we may assume that the complex structures $J_0$ and $J_1$ are sections of $Z_+(Y)$. In our case, since $Y$ is parallelizable, the twistor bundle $Z_+(Y)$ is actually trivial, therefore $J_0,J_1$ are represented by smooth maps $f_0,f_1:Y\rightarrow\mathbb{P}^3$. On the other hand, since $Y$ deformation retracts to the surface $\Sigma_g$, we only need to analyze maps $\Sigma_g\rightarrow\mathbb{P}^3$. Since $\mathbb{P}^3$ is simply-connected, the only relevant obstruction for the homotopy equivalence between $f_0$ and $f_1$ is given by $\pi_2(\mathbb{P}^3)\cong\mathbb{Z}$. To prove that $f_0$ and $f_1$ are homotopy equivalent, it is therefore enough to prove the vanishing of the cohomology class $a(J_0,J_1)\in H^2(Y;\mathbb{Z})$ defined by
\begin{equation}
a(J_0,J_1):=f_1^\ast h-f_0^\ast h
\end{equation}
for some (positive) generator $h\in H^2(\mathbb{P}^3;\mathbb{Z})$. 

\begin{lemma}\label{lemma:ob}
Let $c_1(Y,J)$ be the first Chern class of $Y$ with respect to the complex structure $J$, then
\begin{equation}
c_1(Y,J_1)-c_1(Y,J_0)=2a(J_0,J_1).
\end{equation}
\end{lemma}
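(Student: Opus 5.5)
The plan is to reduce the statement to a characteristic class computation on the fibre $\mathit{SO}(6)/U(3)\cong\mathbb{P}^3$, using the trivialization of $Z_+(Y)$ coming from a parallelization of $Y$. Fix a global oriented orthonormal frame of $\mathit{TY}$. Then $\mathit{TY}\cong Y\times\mathbb{R}^6$, and an orthogonal almost complex structure $J$ corresponds to a map $f\colon Y\rightarrow\mathit{SO}(6)/U(3)$. Under this correspondence, $(\mathit{TY},J)$ is isomorphic as a complex vector bundle to $f^\ast\mathcal{E}$. Here $\mathcal{E}\rightarrow\mathit{SO}(6)/U(3)$ is the tautological rank $3$ complex bundle: the trivial $\mathbb{R}^6$ equipped, at the point $[A]$, with the complex structure $AJ_{\mathrm{std}}A^{-1}$. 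Naturality of Chern classes then gives $c_1(Y,J_i)=f_i^\ast c_1(\mathcal{E})$.

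The key step is therefore to identify $c_1(\mathcal{E})\in H^2(\mathbb{P}^3;\mathbb{Z})\cong\mathbb{Z}h$ and to show it equals $\pm2h$. Granting this, we get
\begin{equation*}
c_1(Y,J_1)-c_1(Y,J_0)=\pm2\left(f_1^\ast h-f_0^\ast h\right)=\pm2a(J_0,J_1).
\end{equation*}
The sign is absorbed into the choice of generator $h$, which the statement calls ``positive'', or fixed by orientation conventions.

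To compute $c_1(\mathcal{E})$, I would use the fibration $U(3)\rightarrow\mathit{SO}(6)\rightarrow\mathbb{P}^3$. Its long exact sequence gives $\pi_2(\mathbb{P}^3)\cong\ker\left(\pi_1(U(3))\rightarrow\pi_1(\mathit{SO}(6))\right)$, since $\pi_2(\mathit{SO}(6))=0$. The map $\pi_1(U(3))=\mathbb{Z}\rightarrow\pi_1(\mathit{SO}(6))=\mathbb{Z}/2$ is reduction mod $2$, because the determinant loop realifies to a rotation in a single $2$-plane. Hence the generator of $\pi_2(\mathbb{P}^3)$ maps to twice the generator of $\pi_1(U(3))$.

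Next, $c_1(\mathcal{E})$ evaluated on a sphere $S^2\rightarrow\mathbb{P}^3$ is computed by the clutching construction. Since $\mathcal{E}$ is pulled back from the trivial real bundle, its restriction to the sphere is obtained by clutching the trivial complex bundles on the two hemispheres via the loop in $U(3)$ given by the boundary map. The first Chern number of a clutched bundle equals the degree of the determinant of the clutching loop, which is $\pm2$. Thus $\langle c_1(\mathcal{E}),[\mathbb{P}^1]\rangle=\pm2$, that is, $c_1(\mathcal{E})=\pm2h$.

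As a cross-check, I would use the standard identification of $\mathit{SO}(6)/U(3)$ with the space of positive pure spinor lines, that is, $\mathbb{P}(S^+)=\mathbb{P}^3$. The tangent bundle of the fibre is $\Lambda^2\mathcal{E}^\ast$ (or $\Lambda^2\mathcal{E}$), so $c_1(T\mathbb{P}^3)=4h$ equals $\pm2c_1(\mathcal{E})$, again giving $c_1(\mathcal{E})=\pm2h$. Either route suffices.

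The main obstacle is this identification of $c_1(\mathcal{E})$ with $2h$, including the sign. The rest is naturality, and the fact that the trivializations used for $f_0$ and $f_1$ are the same, so both pullbacks are of the same class.
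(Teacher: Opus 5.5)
Your argument is correct, but it obtains the factor $2$ by a different route from the paper. The paper computes nothing on the fibre. It cites \cite{gmt}, Lemma 5.8, for the $7$-equivalence $B\kappa\colon BU(3)\to B\mathit{Spin}^c(6)$, which identifies almost complex structures on $Y$ with $\mathit{Spin}^c$ structures. From that reference it takes that $2a(J_0,J_1)$ is the change in $c_1$ of the $\mathit{Spin}^c$ determinant line, and this is where the $2$ is hidden. It then uses $B\delta\circ B\kappa=B\det$ to identify that line with $\det_{\mathbb{C}}(TY,J)$.

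You instead use the parallelization to write $(TY,J_i)\cong f_i^\ast\mathcal{E}$ and show $c_1(\mathcal{E})=\pm2h$ directly. You do this either via $\partial\pi_2(\mathbb{P}^3)=2\mathbb{Z}\subset\pi_1(U(3))$ plus clutching, or via $T\mathbb{P}^3\cong\Lambda^2\mathcal{E}^\ast$. Both routes check out. Concretely, under $\mathit{SO}(6)/U(3)\cong\mathit{SU}(4)/S(U(1)\times U(3))$ one finds $\mathcal{E}\cong\mathcal{O}(-1)\otimes(\mathbb{C}^4/\mathcal{O}(-1))$ or its conjugate, with $c_1=-2h$ or $2h$ respectively.

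Your version is self-contained and makes the origin of the $2$ transparent. Its only extra input is a trivialization of $Z_+(Y)$, which the paper already uses to define $a(J_0,J_1)$, so nothing is lost. The paper's version is shorter and places the statement in the $\mathit{Spin}^c$ framework, which explains conceptually why $a$ is the relevant invariant.

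Neither argument fixes the sign beyond the convention for the ``positive'' generator $h$. The sign is irrelevant for the application: $H^2(Y;\mathbb{Z})\cong\mathbb{Z}$ is torsion-free, so $c_1(Y,J_1)=c_1(Y,J_0)$ forces $a(J_0,J_1)=0$.

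One small correction to your justification of the clutching step. The restriction of $\mathcal{E}$ to a $2$-sphere is clutched by $\partial$ of its homotopy class because $\mathcal{E}=\mathit{SO}(6)\times_{U(3)}\mathbb{C}^3$ is associated to the principal $U(3)$-bundle $\mathit{SO}(6)\to\mathbb{P}^3$. It is not because the underlying real bundle of $\mathcal{E}$ is trivial.
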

\begin{proof}
This follows essentially from \cite{gmt}, Lemma 5.8 and the discussions right after it. More precisely, for oriented $6$-manifolds there is a $7$-equivalence $B\kappa:BU(3)\rightarrow B\mathit{Spin}^c(6)$ on the classifying spaces which identifies the set of isomorphism classes of almost complex structures on $Y$ with the set of isomorphism classes of $\mathit{Spin}^c$ structures on $Y$. Then $2a(J_0,J_1)$ equals the difference of the Chern classes of the line bundle classified by the projection $B\delta:B\mathit{Spin}^c(6)\rightarrow BS^1$. By the commutativity of the diagram
\begin{equation}
	\begin{tikzcd}
	BU(3) \arrow[d,"B\det"'] \arrow[r,"B\kappa"] &B\mathit{Spin}^c(6) \arrow[d,"B\delta"] \\
	BS^1 \arrow[r,"="] &BS^1
	\end{tikzcd}
\end{equation}
this line bundle can be identified with the line bundle classified by $B\det:BU(3)\rightarrow BS^1$, and the lemma follows.
\end{proof}

Proposition \ref{proposition:h} now follows from Lemmas \ref{lemma:K} and \ref{lemma:ob}. Note that since $c_1(T^\ast\Sigma_g)=0$, the first Chern class on $T^\ast\Sigma_g\times\mathbb{C}$ clearly vanishes.

\section{Applying the $h$-principle}\label{section:h}

To complete the proof of existence part of Theorem \ref{theorem:main}, namely the affine threefold $X_{g,\alpha}$ constructed in Section \ref{section:construction} is Stein deformation equivalent to $T^\ast\Sigma_g\times\mathbb{C}$, we use the following well-known theorem, which is a consequence of the $h$-principle. Recall that a Stein structure on an even dimensional manifold $W$ is a couple $(J,\phi)$, where $J$ is a complex structure and $\phi:W\rightarrow\mathbb{R}$ is a smooth function that is both exhausting and plurisubharmonic.

\begin{theorem}[\cite{cef}, Theorem 15.14]\label{theorem:h}
Let $(J_0,\phi_0)$ and $(J_1,\phi_1)$ be two flexible Stein structures on a manifold $W$ of dimension $2n>4$. If $J_0$ and $J_1$ are homotopic as almost complex structures, then $(J_0,\phi_0)$ and $(J_1,\phi_1)$ are homotopic as Stein structures.
\end{theorem}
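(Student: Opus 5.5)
The statement is quoted from \cite{cef}, so we do not reprove it from scratch. We only outline how the argument goes, taking as black boxes the major theorems it rests on: the Stein--Weinstein correspondence, Gromov's $h$-principle for subcritical isotropic embeddings, and Murphy's $h$-principle for loose Legendrians.

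First, we translate both Stein structures into Weinstein structures. A Stein structure $(J,\phi)$, after a small perturbation making $\phi$ Morse, gives a Weinstein structure $(\omega_\phi=-dd^{\mathbb{C}}\phi,\,\nabla_\phi\phi,\,\phi)$. Conversely, by the Cieliebak--Eliashberg correspondence, Weinstein homotopies lift to Stein homotopies, and these can be chosen with almost complex structures homotopic to the given ones. It therefore suffices to build a Weinstein homotopy between $\mathfrak W_0$ and $\mathfrak W_1$, both flexible, where flexible means every index $n$ handle is attached along a loose Legendrian.

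Next, we connect the Lyapunov functions by a generic path of functions $\phi_t$ on $W$. Cerf theory lets us take $\phi_t$ to be Morse except at finitely many birth--death moments. We also arrange that $\phi_t$ has no critical points of index $>n$; this is possible because both ends are Weinstein and $2n>4$. We then construct the Weinstein structure along the path inductively over the critical values, i.e.\ handle by handle. This is done as a parametric version of the existence theorem. Given a homotopy of almost complex structures (hence of formal, nondegenerate $2$-forms) from $J_0$ to $J_1$, we choose formal isotropic attaching data along the path and then make it genuine. Attaching spheres of index $k<n$ are subcritical isotropic, and Gromov's parametric $h$-principle turns the formal isotropic isotopies into genuine ones. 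Birth--death pairs are modelled on the standard Weinstein cancellation. It is here that the homotopy $J_0\simeq J_1$ is used: it provides the formal Legendrian data over the whole parameter interval.

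The main obstacle is the index $n$ handles, whose attaching spheres are Legendrian. Formal Legendrian isotopy does not imply genuine Legendrian isotopy in general. This is where flexibility is used. By Murphy's $h$-principle, loose Legendrian embeddings satisfy the parametric $h$-principle, and looseness is preserved under the relevant handle moves, since a loose chart can be kept away from the other handles. At a birth--death moment, the newly born index $n$ sphere can be stabilized to be loose without changing its formal class, because $n\ge3$. This keeps the whole family flexible.

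Assembling these pieces gives a Weinstein homotopy from $\mathfrak W_0$ to $\mathfrak W_1$. Lifting it to a Stein homotopy via the correspondence then shows that $(J_0,\phi_0)$ and $(J_1,\phi_1)$ are Stein homotopic.
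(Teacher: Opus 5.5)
The paper gives no argument of its own here; it simply quotes Cieliebak--Eliashberg, Theorem 15.14. Your outline is the route of that source: pass to Weinstein structures, run a Morse homotopy without critical points of index $>n$, realize the subcritical attaching data by Gromov's h-principle and the index $n$ data by Murphy's h-principle for loose Legendrians (creating $(n-1,n)$ pairs with a loose, formally standard attaching sphere), then lift the Weinstein homotopy back to a Stein homotopy. One small correction: all you need from Murphy is the isotopy statement (formally isotopic loose Legendrians are Legendrian isotopic), which is exactly what a one-parameter family requires, not a fully parametric h-principle.
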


To apply Theorem \ref{theorem:h}, we need to show that both of the Stein structures on $X_{g,\alpha}$ and $T^\ast\Sigma_g\times\mathbb{C}$ are flexible. Since $T^\ast\Sigma_g\times\mathbb{C}$ is subcritical, it is clearly flexible. It remains to show that $X_{g,\alpha}$ with its Stein structure coming from the embedding into the affine space is also flexible. To see this, choose a plurisubharmonic function $\psi:S_\alpha\rightarrow\mathbb{R}$ on the $\mathit{TC}$-torsor. Since $S_\alpha$ is an affine surface, it is a consequence of the Serre--Swan theorem that the canonical bundle $K_{S_\alpha}$ is a direct summand of a finite-rank trivial algebraic bundle. This gives a holomorphic embedding
\begin{equation}
\iota:X_{g,\alpha}\hookrightarrow S_\alpha\times\mathbb{C}^N.
\end{equation}
Define the function $\Psi:X_{g,\alpha}\rightarrow\mathbb{R}$ by
\begin{equation}
\Psi(s,v)=\psi(s)+\vert\!\vert\iota(s,v)\vert\!\vert^2,
\end{equation}
where $s\in S_\alpha$, $v\in\mathbb{C}^N$ is a vector in the fiber direction of $K_{S_\alpha}$ under the embedding $\iota$, and $\vert\!\vert\cdot\vert\!\vert$ denotes the standard norm on $\mathbb{C}^N$. As the restriction of the function
\begin{equation}
(s,z)\mapsto\psi(s)+\vert\!\vert z\vert\!\vert^2
\end{equation}
on $S_\alpha\times\mathbb{C}^N$, $\Psi$ is a proper plurisubharmonic function on $X_{g,\alpha}$. Since the derivative of $\Psi(s,v)$ in the fiber direction vanishes only when $v=0$, the critical points of $\Psi$ are precisely
\begin{equation}
(s,0)\textrm{ with }d\psi(s)=0.
\end{equation}
At these points, the fiberwise Hessian is positive definite, therefore the Morse indices satisfy
\begin{equation}
\mathrm{ind}_{(s,0)}\Psi=\mathrm{ind}_s\psi\leq2.
\end{equation}
Since one can take $\psi$ to be the plurisubharmonic function induced from the embedding of $S_\alpha$ into some affine space, we see that the natural Stein structure on the affine threefold $X_{g,\alpha}$ is also subcritical. We have therefore proved

\begin{proposition}
For any choice of the algebraic curve $C$ and the non-trivial first order deformation $\alpha\in H^1(C,\mathit{TC})$, the affine threefolds $X_{g,\alpha}$ are Stein deformation equivalent to $T^\ast\Sigma_g\times\mathbb{C}$.
\end{proposition}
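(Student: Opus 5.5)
The plan is to assemble the three ingredients already established and feed them into Theorem \ref{theorem:h}. Let $Y\cong\Sigma_g\times\mathbb{R}^4$ be the common underlying oriented $6$-manifold. By Lemma \ref{lemma:diff} there is an orientation-preserving diffeomorphism $\Phi:X_{g,\alpha}\rightarrow Y$. Under $\Phi$ we transport the affine Stein structure $(J_X,\Psi)$ constructed above to the pair $(J_1,\Psi\circ\Phi^{-1})$ on $Y$. We compare this with the standard product Stein structure $(J_0,\phi_0)$ on $T^\ast\Sigma_g\times\mathbb{C}$. Here $2n=6>4$, so Theorem \ref{theorem:h} is applicable once its two hypotheses are verified.

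\textbf{Step 1: flexibility.} The product structure is subcritical: $\phi_0$ can be taken to be the sum of a Morse exhaustion of $T^\ast\Sigma_g$ with indices $\leq2$ and $|z|^2$ on $\mathbb{C}$. For $X_{g,\alpha}$, the computation just carried out shows that $\Psi$ has critical points only on the zero section, with $\mathrm{ind}\,\Psi=\mathrm{ind}\,\psi\leq2<3$. One point needs checking: $\psi$ from an affine embedding of $S_\alpha$ may be degenerate. I would first perturb $\psi$ to a Morse plurisubharmonic exhaustion, which is standard and preserves the index bound $\leq\dim_{\mathbb{C}}S_\alpha=2$. The Stein structure obtained this way is deformation equivalent to the one induced by any affine embedding of $X_{g,\alpha}$, since any two such exhausting plurisubharmonic functions are connected through convex combinations. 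Subcritical Stein structures are flexible, so both structures are flexible.

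\textbf{Step 2: homotopy of almost complex structures.} This is Proposition \ref{proposition:h}, which is proved by combining Lemma \ref{lemma:K} with Lemma \ref{lemma:ob}. Lemma \ref{lemma:K} gives $c_1(Y,J_1)=0$, and $c_1(Y,J_0)=0$ also holds. Hence $2a(J_0,J_1)=0$. Since $H^2(Y;\mathbb{Z})\cong H^2(\Sigma_g;\mathbb{Z})\cong\mathbb{Z}$ is torsion-free, it follows that $a(J_0,J_1)=0$. The remaining obstruction groups $H^k(\Sigma_g;\pi_k(\mathbb{P}^3))$ vanish for $k\neq2$, so $f_0\simeq f_1$.

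\textbf{Step 3: conclusion.} Theorem \ref{theorem:h} now gives a Stein homotopy between $(J_0,\phi_0)$ and $(J_1,\Psi\circ\Phi^{-1})$, which is the required Stein deformation equivalence. The main subtlety is Step 1: we must ensure that the Stein structure used for $X_{g,\alpha}$ is genuinely the one coming from an affine embedding, and that the Morse perturbation keeps the function plurisubharmonic and exhausting. I expect this to be handled by a small $C^2$ perturbation on compact sublevel sets, combined with the convexity of the space of exhausting plurisubharmonic functions.
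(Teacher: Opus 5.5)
Your proposal is correct and follows the paper's own argument. Flexibility comes from subcriticality of $\Psi(s,v)=\psi(s)+\|\iota(s,v)\|^2$, whose critical points lie on the zero section with index that of $\psi$, hence at most $2$; the homotopy $J_0\simeq J_1$ is Proposition \ref{proposition:h}, where you rightly make explicit that $H^2(Y;\mathbb{Z})\cong\mathbb{Z}$ is torsion-free; and Theorem \ref{theorem:h} then concludes. Your Morse perturbation is harmless, but rather than appeal to convex combinations (which by themselves do not stop critical points escaping to infinity), it is cleaner to take $\psi=|w-p|^2|_{S_\alpha}$ for a generic $p$, which is Morse and makes $\Psi$ literally the function induced by an affine embedding of $X_{g,\alpha}$.
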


Compare the proof of \cite{cml}, Theorem 4.9, where the flexibility of an affine threefold is proved through a less elementary argument.

\section{Dependence on the choices}\label{section:dependence}

In this section, we investigate the dependence of the affine threefolds $X_{g,\alpha}$ on the choices of the algebraic curve $C$ and the first order deformation $\alpha\in H^1(C,\mathit{TC})$ made in our construction in Section \ref{section:construction}.

We first consider the situation of varying the underlying algebraic curve $C$ in the moduli space $\mathcal{M}_g$ of smooth genus $g$ curves. Denote temporarily by $X_{C,\alpha}$ and $X_{C',\beta}$ the affine threefolds constructed in Section \ref{section:construction} using genus $g$ curves $C,C'$ and non-trivial choices of $\alpha\in H^1(T,\mathit{TC})$ and $\beta\in H^1(C',\mathit{TC}')$, respectively.

\begin{proposition}\label{proposition:C}
$X_{C,\alpha}$ and $X_{C',\beta}$ are isomorphic as affine algebraic threefolds only if $C$ is isomorphic to $C'$.
\end{proposition}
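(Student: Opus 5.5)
The idea is to recover the curve $C$ intrinsically from the affine variety $X_{C,\alpha}$. I would do this through the maximal rational curves (or simply the rational curves) that the variety contains, and ultimately through the Albanese map or another functorial invariant of algebraic varieties.

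The concrete plan runs in five steps.

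First, consider the composite morphism
\begin{equation}
\rho:=q\circ\pi:X_{C,\alpha}\rightarrow S_\alpha\rightarrow C.
\end{equation}
Every fiber of $\rho$ is an affine space $\mathbb{A}^2$: the fiber of $q$ is an $\mathbb{A}^1$ (a torsor under a line), and over it $K_{S_\alpha}$ restricts to a line bundle on $\mathbb{A}^1$, which is trivial. So $\rho$ is an $\mathbb{A}^2$-fibration, in fact Zariski locally trivial as an iterated affine bundle.

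Second, show that any morphism $\mathbb{A}^1\rightarrow X_{C,\alpha}$ has image contained in a fiber of $\rho$. Indeed, the composite $\mathbb{A}^1\rightarrow C$ extends to $\mathbb{P}^1\rightarrow C$, and this must be constant since $g(C)\geq2$.

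Third, take an isomorphism $\Phi:X_{C,\alpha}\rightarrow X_{C',\beta}$. The fibers of $\rho$ are exactly the equivalence classes generated by the relation ``joined by images of $\mathbb{A}^1$''. Any two points of $\mathbb{A}^2$ lie on a line, and by the second step no $\mathbb{A}^1$ crosses fibers. The isomorphism $\Phi$ preserves this relation, so it maps fibers of $\rho$ onto fibers of $\rho'$.

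Fourth, conclude that $\rho'\circ\Phi$ is constant on fibers of $\rho$. Since $\rho$ is smooth and surjective with connected fibers, it is a categorical quotient: $\rho_\ast\mathcal{O}=\mathcal{O}_C$ by Zariski local triviality, and the map is flat. Hence $\rho'\circ\Phi$ descends to a morphism $\phi:C\rightarrow C'$. Applying the same argument to $\Phi^{-1}$ gives an inverse, so $C\cong C'$.

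A cleaner alternative for the third and fourth steps is the Albanese. $\rho$ has $\mathbb{A}^2$ fibers, so $H^1$ and the quasi-Albanese of $X$ coincide with those of $C$. Then $\Phi$ induces an isomorphism of quasi-Albanese maps, giving an isomorphism $J(C)\cong J(C')$ compatible with the images of $X$. But recovering $C$ itself, and not just $J(C)$ with its polarization, is more awkward this way. So I prefer the rational-curve argument.

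The main obstacle is the descent in the fourth step: checking that a morphism constant on the fibers of $\rho$ factors through $C$. I would handle it locally. On an open set $U\subset C$ where $\rho^{-1}(U)\cong U\times\mathbb{A}^2$, restrict the map to the zero section $U\times\{0\}$. This defines $\phi|_U$, and constancy on fibers shows the local definitions agree and glue to a global $\phi$.
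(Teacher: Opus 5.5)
Your argument is correct and is essentially the paper's. Both rest on the fact that a morphism from $\mathbb{A}^1$, and hence from an $\mathbb{A}^2$-fiber, to a curve of genus $\geq 2$ is constant, so $\rho'\circ\Phi$ descends to a morphism $C\rightarrow C'$, and applying the same argument to $\Phi^{-1}$ gives the inverse. Your detour through $\mathbb{A}^1$-chain equivalence classes is unnecessary, since restricting $\rho'\circ\Phi$ directly to a fiber $\mathbb{A}^2$ suffices; your zero-section gluing usefully makes explicit the descent step the paper leaves implicit, but drop the aside ``$\rho_\ast\mathcal{O}=\mathcal{O}_C$'', which is false (locally $\rho_\ast\mathcal{O}_X\cong\mathcal{O}_U[x,t]$) and which that gluing argument does not use.
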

\begin{proof}
Consider the composition of projections
\begin{equation}\label{eq:pi}
\pi_C:X_{C,\alpha}\rightarrow S_\alpha\rightarrow C,
\end{equation}
which is Zariski locally an $\mathbb{A}^2$-bundle. If there is an isomorphism $\phi:X_{C,\alpha}\xrightarrow{\cong}X_{C',\beta}$, then the restriction of the composition $\pi_{C'}\circ\phi:X_{C,\alpha}\rightarrow C'$ to a fiber of $\pi_C$ gives a morphism $\mathbb{A}^2\rightarrow C'$, which is necessarily constant by our assumption that $g(C')\geq2$. Consequently, $\pi_{C'}\circ\phi$ descends to a morphism $\theta_{C,C'}:C\rightarrow C'$. Applying the same argument to $\phi^{-1}:X_{C',\beta}\rightarrow X_{C,\alpha}$ we get a morphism $C'\rightarrow C$ whose composition with $\theta_{C,C'}$ gives the identity.
\end{proof}

Next, we fix the curve $C$ in the construction of the affine threefolds $X_{g,\alpha}$ and consider the dependence of $X_{g,\alpha}$ on the choice of $\alpha\in H^1(T,\mathit{TC})$. Our goal is to prove the following:

\begin{proposition}\label{proposition:orbit}
For a fixed genus $g$ curve $C$, $X_{g,\alpha}$ is isomorphic to $X_{g,\beta}$ if and only if $[\alpha],[\beta]\in\mathbb{P}\left(H^1(C,\mathit{TC})\right)$ lie in the same $\mathrm{Aut}(C)$-orbit.
\end{proposition}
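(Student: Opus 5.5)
The plan is to prove the two directions separately. The ``if'' direction is a direct construction. The ``only if'' direction reduces, via the argument of Proposition \ref{proposition:C}, to showing that an isomorphism $X_{g,\alpha}\cong X_{g,\beta}$ induces an isomorphism of the underlying torsors $S_\alpha\cong S_\beta$ covering an automorphism of $C$.

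\emph{If.} I would treat scalars and automorphisms separately.

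First, for $\lambda\in\mathbb{C}^\ast$, the extension class $\lambda\alpha$ is the pushout of $0\to\mathit{TC}\to E_\alpha\to\mathcal{O}_C\to0$ along multiplication by $\lambda$ on $\mathit{TC}$. Equivalently, $E_{\lambda\alpha}\cong E_\alpha$ via a map that is $\lambda$ on the sub and the identity on the quotient. This map restricts to an isomorphism $p^{-1}(1)\cong p^{-1}(1)$, so $S_{\lambda\alpha}\cong S_\alpha$ as varieties over $C$.

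Second, for $\sigma\in\mathrm{Aut}(C)$, pulling the extension back along $\sigma$ and using the canonical identification $\sigma^\ast\mathit{TC}\cong\mathit{TC}$ (induced by $d\sigma$) gives $\sigma^\ast E_\alpha\cong E_{\sigma^\ast\alpha}$. Hence $S_{\sigma^\ast\alpha}\cong S_\alpha\times_{C,\sigma}C\cong S_\alpha$ as abstract surfaces.

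In both cases an isomorphism of surfaces $S\cong S'$ induces $K_S\cong K_{S'}$, hence an isomorphism of total spaces $X_{g,\alpha}\cong X_{g,\beta}$.

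\emph{Only if.} Let $\phi:X_{g,\alpha}\to X_{g,\beta}$ be an isomorphism. As in the proof of Proposition \ref{proposition:C}, every morphism $\mathbb{A}^2\to C$ is constant, so $\pi_C\circ\phi$ descends to an automorphism $\sigma$ of $C$. Replacing $\beta$ by $\sigma^\ast\beta$ and using the ``if'' direction, I may assume $\phi$ is an isomorphism of $\mathbb{A}^2$-fibrations over $\mathrm{id}_C$.

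The key step is to show that $\phi$ carries the intermediate fibration $X_{g,\alpha}\to S_\alpha$ to $X_{g,\beta}\to S_\beta$. This requires recovering $\mathcal{O}(S_\alpha)\subset\mathcal{O}(X_{g,\alpha})$ intrinsically. I would attempt this with locally nilpotent derivations, as follows.
\begin{itemize}
\item Since $K_{S_\alpha}\cong q^\ast K_C^{\otimes2}$, the coordinate ring $\mathcal{O}(X_{g,\alpha})=\bigoplus_{n\ge0}H^0(S_\alpha,K_{S_\alpha}^{-n})$ carries fiber-linear $\mathbb{G}_a$-actions. Explicitly, locally nilpotent derivations $\partial$ with $\partial(\mathcal{O}(S_\alpha))=0$ come from sections of $K_{S_\alpha}$, and these are plentiful because $S_\alpha$ is affine.
\item One then computes that the intersection of the kernels of all locally nilpotent derivations of $\mathcal{O}(X_{g,\alpha})$ whose kernel contains $\mathcal{O}(C)$ is exactly $\mathcal{O}(S_\alpha)$.
\item The essential input is that the torsor direction of $S_\alpha\to C$ admits no $\mathbb{G}_a$-action extending over all of $X$. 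The reason is that $\alpha\ne0$: the torsor is non-trivial, so it has no section, and by affineness any vector field along the torsor direction would have to come from $H^0(C,\mathit{TC})=0$ twisted by positive powers of $K_C^{\otimes 2}$.
\end{itemize}
To make the last point precise, I would write a locally nilpotent derivation in local coordinates $(x,t,w)$, with $x$ on $C$, $t$ the torsor coordinate and $w$ the fiber coordinate of $K_{S_\alpha}$. I would then check, using the transition functions $t\mapsto t+\alpha_{ij}$, that a nonzero $\partial t$-component forces the cocycle $\alpha_{ij}$ to be a coboundary. This ML-type computation is the step I expect to be the main obstacle.

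Granting it, $\phi$ restricts to an isomorphism $\mathcal{O}(S_\alpha)\cong\mathcal{O}(S_\beta)$ over $C$, that is, an isomorphism $S_\alpha\cong S_\beta$ of $\mathbb{A}^1$-bundles over $C$. Any such isomorphism is fiberwise affine, of the form $t\mapsto\lambda t+s$. Here the linear part is an automorphism of the line bundle $\mathit{TC}$, so $\lambda\in H^0(C,\mathcal{O}_C^\ast)=\mathbb{C}^\ast$, and $s$ is a local section. Comparing cocycles gives $\beta=\lambda\alpha$ in $H^1(C,\mathit{TC})$. Undoing the reduction by $\sigma$, this shows that $[\alpha]$ and $[\beta]$ lie in the same $\mathrm{Aut}(C)$-orbit of $\mathbb{P}(H^1(C,\mathit{TC}))$.
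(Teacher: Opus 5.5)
Your architecture matches the paper's. You recover $B_\alpha=\mathcal{O}(S_\alpha)$ intrinsically as $\mathit{ML}(A_\alpha)$, descend $\phi$ to $S_\alpha\cong S_\beta$ over an automorphism of $C$, and compare cocycles. Your restriction to derivations killing $\mathcal{O}(C)$ is vacuous, since $\mathcal{O}(C)=\mathbb{C}$ and all $\mathbb{G}_a$-orbits lie in fibres over $C$ anyway. The ``if'' direction and the final step $\beta=\lambda\alpha$ are fine.

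The gap is the key lemma $\mathit{ML}(A_\alpha)=B_\alpha$. You leave it open, and the mechanism you propose for it would not work.
\begin{itemize}
\item The non-triviality of $\alpha$ plays no role there; it is only used to make $S_\alpha$, and hence $X_{g,\alpha}$, affine.
\item For a $\mathit{TC}$-torsor the transitions are $t_i=\tau_{ij}t_j+\alpha_{ij}$, with $\tau_{ij}$ the cocycle of $\mathit{TC}$. Then $\partial_{t_j}=\tau_{ij}\partial_{t_i}$, so vertical vector fields are sections of $q^\ast\mathit{TC}$ and the $\alpha_{ij}$ drop out. No coboundary condition on $\alpha$ can come out of such a computation.
\item With the transitions $t\mapsto t+\alpha_{ij}$ that you wrote, $\partial_t$ is a globally defined vertical vector field generating a $\mathbb{G}_a$-action for every $\alpha$. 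So that model cannot give your conclusion either.
\end{itemize}
What actually rules out motion in the torsor direction is negativity. Degree-$k$ elements of $A_\alpha$ are sections of $K_{S_\alpha}^{-k}\cong q^\ast K_C^{\otimes-2k}$. The relevant group is therefore $H^0(C,\mathit{TC}\otimes K_C^{\otimes-2k})=H^0(C,K_C^{\otimes-(2k+1)})$, which vanishes for $k\ge0$ because $g\ge2$. Twisting by positive powers of $K_C^{\otimes2}$, as you wrote, would give $H^0(C,K_C^{\otimes(2k-1)})\neq0$ for $k\ge1$.

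You are also missing the reduction that makes a local computation meaningful. Write $K=\mathbb{C}(C)$, with your $t$ the torsor coordinate and $w$ the fibre coordinate. A locally nilpotent derivation of the generic fibre $K[t,w]$ need not be a multiple of $\partial_t$ plus something harmless.

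The paper uses the grading of $A_\alpha$ by fibre degree, coming from the $\mathbb{G}_m$-action on $K_{S_\alpha}$. It reduces to a homogeneous $D$ of some degree $k\ge0$ not vanishing on $B_\alpha=A_\alpha^0$. One clean way to see this: the top homogeneous component of a nonzero locally nilpotent derivation is again locally nilpotent, while components of negative degree kill $A_\alpha^0$.

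For homogeneous $D$ of degree $k\ge0$ the argument runs as follows.
\begin{itemize}
\item $D(w)\in w^{k+1}K[t]$, so $w\mid D(w)$, which forces $D(w)=0$.
\item Then $D=b(t)w^k\partial_t$ with $b\in K[t]$, and local nilpotency forces $b=\mu\in K$.
\item Hence $D|_{B_\alpha}$ is the pullback of a rational section of $K_C^{\otimes-(2k+1)}$. By flatness of $q$ this section is regular, so it is zero.
\end{itemize}
This is the argument you need in place of the coboundary computation.
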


We start by introducing some notations. Define
\begin{equation}
A_\alpha:=H^0(X_{g,\alpha},\mathcal{O}_{X_{g,\alpha}}),\textrm{ }B_\alpha:=H^0(S_\alpha,\mathcal{O}_{S_\alpha}).
\end{equation}
Since $S_\alpha$ is affine and we have the isomorphism $K_{S_\alpha}\cong q^\ast K_C^{\otimes 2}$ noticed in Section \ref{section:construction}, one can write the coordinate ring of the total space of its canonical bundle as
\begin{equation}
A_\alpha=\bigoplus_{n\geq0}H^0(S_\alpha,q^\ast K_C^{\otimes-2n}),
\end{equation}
which a non-negatively graded ring, with its degree zero part $A_\alpha^0=B_\alpha$. Recall that the \textit{Makar--Limanov invariant} of the commutative algebra $A_\alpha$ is defined to be
\begin{equation}
\mathit{ML}(A_\alpha):=\bigcap_{D\in\mathrm{LND}(A_\alpha)}\ker D,
\end{equation}
where $\mathrm{LND}(A_\alpha)$ is the set of locally nilpotent derivations on $A_\alpha$. Notice that since $A_\alpha$ is the coordinate ring of $Y_\alpha$, a locally nilpotent derivation $D$ corresponds precisely to an algebraic action of the additive group $\mathbb{G}_a$ on $Y_\alpha$, and $\ker D$ corresponds to the ring of invariant functions under the $\mathbb{G}_a$-action. See \cite{gfa} for details.

The key point of the proof of Theorem \ref{proposition:orbit} is the following lemma, which computes the Makar--Limanov invariant of $A_\alpha$.

\begin{lemma}\label{lemma:ML}
We have $\mathit{ML}(A_\alpha)=B_\alpha$.
\end{lemma}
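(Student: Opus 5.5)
The plan is to prove the two inclusions $\mathit{ML}(A_\alpha)\subseteq B_\alpha$ and $B_\alpha\subseteq\mathit{ML}(A_\alpha)$ separately. The first uses the fiberwise translations of the line bundle $\pi:X_{g,\alpha}\to S_\alpha$. The second uses the rigidity coming from the fact that $C$ is a projective curve of genus $\geq2$ with $H^0(C,\mathit{TC})=0$.

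\emph{The inclusion $\mathit{ML}(A_\alpha)\subseteq B_\alpha$.} Let $s\in H^0(S_\alpha,K_{S_\alpha})$ be any section. Translation $v\mapsto v+t\,s(\pi(v))$ in the fibers of $X_{g,\alpha}=\mathit{Tot}(K_{S_\alpha})$ is an algebraic $\mathbb{G}_a$-action. Let $D_s$ be the corresponding locally nilpotent derivation. It has degree $-1$ for the grading $A_\alpha=\bigoplus_n H^0(S_\alpha,q^\ast K_C^{\otimes-2n})$: it sends a degree $n$ function, which is a polynomial of degree $n$ on each fiber, to a degree $n-1$ function by contracting with $s$.

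Since $S_\alpha$ is affine, $K_{S_\alpha}$ is globally generated. So for every point $x\in S_\alpha$ there is an $s$ with $s(x)\neq0$. A function invariant under all the $D_s$ is then, on each fiber of $\pi$, invariant under translation by a nonzero vector, and hence constant on that fiber. Therefore $\bigcap_s\ker D_s=A_\alpha^0=B_\alpha$, which gives $\mathit{ML}(A_\alpha)\subseteq B_\alpha$.

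\emph{The inclusion $B_\alpha\subseteq\mathit{ML}(A_\alpha)$.} Let $D$ be an arbitrary locally nilpotent derivation, with associated $\mathbb{G}_a$-action on $X_{g,\alpha}$. Each orbit is the image of $\mathbb{A}^1$. Composing with $\pi_C:X_{g,\alpha}\to C$ from (\ref{eq:pi}) gives a morphism $\mathbb{A}^1\to C$, which is constant because $g(C)\geq2$, as in the proof of Proposition \ref{proposition:C}. Hence the action preserves every fiber $F_c=\pi_C^{-1}(c)$.

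Over a Zariski open $U\subset C$ where $\mathit{TC}$ and $K_C$ are trivial, we have $F_c\cong\mathbb{A}^2$ with coordinates $(u,v)$. Here $u$ is an affine coordinate on the torsor fiber $q^{-1}(c)$, and $v$ is the fiber coordinate of $K_{S_\alpha}\cong q^\ast K_C^{\otimes2}$. The goal is to show $D(u)=0$, i.e.\ the action also preserves the fibers of $\pi:X_{g,\alpha}\to S_\alpha$.

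To do this I would pass to the associated graded derivation. Using the $\mathbb{Z}_{\geq0}$-grading on $A_\alpha$, the highest-degree homogeneous component $\bar D$ of $D$ is again a nonzero locally nilpotent derivation, homogeneous of some degree $d$ (the standard Makar--Limanov/Derksen degeneration argument, see \cite{gfa}). If $D(B_\alpha)\neq0$, I would arrange, by choosing the filtration appropriately, that $\bar D(B_\alpha)\neq0$ as well. We then split into cases according to $d$.

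\begin{itemize}
\item If $d<0$, then $\bar D(B_\alpha)\subset A_\alpha^{d}=0$ automatically, which is a contradiction.
\item If $d=0$, then $\bar D$ restricts to a locally nilpotent derivation of $B_\alpha$, that is, a $\mathbb{G}_a$-action on $S_\alpha$. By the same argument with $\mathbb{A}^1\to C$, it preserves the $q$-fibers. On each fiber $\cong\mathbb{A}^1$ it acts by a translation, since every $\mathbb{G}_a$-action on $\mathbb{A}^1$ is a translation. The translation amounts are constant along $q$-fibers, so they assemble into a global section of $\mathit{TC}$ over $C$. This section vanishes because $H^0(C,\mathit{TC})=0$, so $\bar D|_{B_\alpha}=0$, a contradiction.
\item If $d>0$, the homogeneous action commutes up to weight with the $\mathbb{G}_m$-scaling of the fibers of $K_{S_\alpha}$. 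Restricted to $F_c\cong\mathbb{A}^2$, a degree $d>0$ homogeneous locally nilpotent derivation sends $u\mapsto D(u)=f(u)v^d$. I would then show that such triangular actions also glue, over $c\in C$, into a global section of a bundle of the form $\mathit{TC}\otimes K_C^{\otimes 2d}$ pulled back along the fibers. Gluing these local formulas requires controlling the transition functions of the nontrivial torsor, i.e.\ actually using $\alpha\neq0$.
\end{itemize}

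I expect this last case, $d>0$, to be the main obstacle. One has to exclude degree-raising homogeneous derivations that move the torsor coordinate $u$, and the argument must genuinely use both the nontriviality of $\alpha$ and the affineness of $S_\alpha$. The plan is to show that $D(u)$, viewed as a function of $u$ on each fiber, must be constant in $u$; otherwise $\bar D$ could not be locally nilpotent. It then becomes a section of $\mathit{TC}\otimes K_C^{\otimes2d}$ over $C$. Finally, I would argue that such a section, combined with the cocycle $\alpha\neq0$, is incompatible with $\bar D$ being globally defined, which forces $\bar D(B_\alpha)=0$ and hence $B_\alpha\subseteq\ker D$ for every locally nilpotent derivation $D$.
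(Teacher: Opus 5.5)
Your inclusion $\mathit{ML}(A_\alpha)\subseteq B_\alpha$, the reduction to $\pi_C$-fibres, and your case $d=0$ are fine and agree with the paper. The genuine gap is the case $d>0$. You leave it as a plan, and the plan rests on a sign error.

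The coordinate $v$ is a \emph{linear function} on the fibres of $K_{S_\alpha}\cong q^\ast K_C^{\otimes2}$. So $v^d$ carries the weight of $q^\ast K_C^{\otimes-2d}$; this is just the grading $A_\alpha^d=H^0(S_\alpha,q^\ast K_C^{\otimes-2d})$. Concretely:
\begin{itemize}
\item if $u_j=h_{ji}u_i+c_{ji}$ on overlaps, then $v_j=h_{ji}^{-2}v_i$;
\item since $D$ kills $h_{ji}$ and $c_{ji}$, you get $D(u_j)=h_{ji}D(u_i)$;
\item writing $D(u_i)=\mu_iv_i^d$ then gives $\mu_j=h_{ji}^{2d+1}\mu_i$.
\end{itemize}
So $\mu$ is a section of $\mathit{TC}\otimes K_C^{\otimes-2d}\cong K_C^{\otimes-(2d+1)}$, not of $\mathit{TC}\otimes K_C^{\otimes2d}$. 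The bundle you wrote has degree $(2d-1)(2g-2)>0$ and many sections, which is why the case looked hard. The correct bundle has negative degree, so $\mu=0$ exactly as in your $d=0$ case. This is precisely the paper's argument, which treats all degrees $k\geq0$ uniformly.

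Your proposed remedy, using $\alpha\neq0$, could not have worked. The translation part $c_{ji}$ of the cocycle is annihilated by $D$ and never enters the gluing rule for $D(u)$. If the relevant bundle had sections, $\mu v^d\partial_u$ would glue to a global locally nilpotent derivation for every $\alpha$. The hypothesis $\alpha\neq0$ enters only through the affineness of $S_\alpha$.

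Two smaller repairs are also needed.
\begin{itemize}
\item Before concluding that $D(u)$ is constant in $u$, you must show $D(v)=0$. For $d\geq0$, $D(v)$ is homogeneous of degree $d+1\geq1$, hence divisible by $v$ in $\mathbb{C}(C)[u,v]$. A locally nilpotent derivation with $v\mid D(v)$ kills $v$.
\item The step ``arrange the filtration so that $\bar D(B_\alpha)\neq0$'' is unjustified as stated, but you do not need it. Once every homogeneous locally nilpotent derivation of degree $\geq0$ is shown to vanish on $u$ and $v$, it vanishes identically. Then the top homogeneous component of any nonzero locally nilpotent derivation $D$ has negative degree, so $D(B_\alpha)\subseteq\bigoplus_{i<0}A_\alpha^i=0$.
\end{itemize}
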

\begin{proof}
We first prove the easier inclusion $\mathit{ML}(A_\alpha)\subset B_\alpha$. As a line bundle, $X_{g,\alpha}=\mathit{Tot}(K_{S_\alpha})$ carries the action of $\mathbb{G}_a$ by fiberwise translations. More precisely, every $s\in H^0(S_\alpha,K_{S_\alpha})$ gives rise to a locally nilpotent derivation $D_s\in\mathrm{LND}(A_\alpha)$. See \cite{adf}, Section 2.2.1. Since the corresponding $\mathbb{G}_a$-actions associated to $s\in H^0(S_\alpha,K_{S_\alpha})$ rescales in the fiber direction of $K_{S_\alpha}$, we have
\begin{equation}
\bigcap_{s\in H^0(S_\alpha,K_{S_\alpha})}\ker D_s=B_\alpha.
\end{equation}
By definition, this implies that $\mathit{ML}(A_\alpha)\subset B_\alpha$.

To see that $\mathit{ML}(A_\alpha)\supset B_\alpha$, we argue by contradiction. Suppose there is a $\partial\in\mathrm{LND}(A_\alpha)$ that does not vanish on $B_\alpha$, by \cite{bmo}, Lemma 11 one can find a homogeneous locally nilpotent derivation $D$ on $A_\alpha$ which is non-vanishing on $B_\alpha$. Without loss of generality, assume $\deg(D)=k\in\mathbb{Z}_{\geq0}$. Since $B_\alpha$ is trivially graded, we have $D(B_\alpha)\subset A_\alpha^k$. 

Observe that $\mathbb{G}_a$ can only act non-trivially on the fiber direction of $\pi_\alpha:X_{g,\alpha}\rightarrow S_\alpha\rightarrow C$. This is because any orbit of the $\mathbb{G}_a$-action on $X_{g,\alpha}$ gives rise to a morphism $\mathbb{A}^1\rightarrow C$, which by the properness of $C$ extends to $\mathbb{P}^1\rightarrow C$, therefore must be constant with the assumption that $g(C)\geq2$. It follows that every $\mathbb{G}_a$-orbit lies in a fiber of $\pi_\alpha$. Equivalently, every associated locally nilpotent derivation annihilates the function field $K=\mathbb{C}(C)\subset\mathbb{C}(Y_\alpha)$. Note that $D\in\mathrm{LND}(A_\alpha)$ extends uniquely to a derivation on the field of fractions, and by abuse of notations we still use $D$ to denote its extension.

Over a generic point $\mathrm{Spec}K$ of $C$, we choose coordinates $x,t$, where $x$ is the coordinate on the torsor $S_\alpha$ and $t$ is the coordinate in the fiber direction of $K_{S_\alpha}$, it follows that
\begin{equation}
\deg(x)=0,\textrm{ }\deg(t)=1.
\end{equation}
Since $D$ is homogeneous of degree $k$, we have
\begin{equation}
D(t)=a(x)t^{k+1},\textrm{ }D(x)=b(x)t^k
\end{equation}
for some $a(x),b(x)\in K[x]$. Since $t$ divides $D(t)$, local nilpotency of $D$ on $A_\alpha$ forces $D(t)=0$ by degree considerations. Thus we must have $D=b(x)t^k\frac{\partial}{\partial x}$. Again by the local nilpotency of $D$, $b(x)$ must be constant, therefore
\begin{equation}\label{eq:D}
D=\mu t^k\frac{d}{dx}\textrm{ for some }\mu\in K.
\end{equation}

Since the restriction $D|_{B_\alpha}:B_\alpha\rightarrow A_\alpha^k$ corresponds geometrically to a global section of
\begin{equation}
T_{S_\alpha/C}\otimes q^\ast K_C^{\otimes-2k}\cong q^\ast\mathit{TC}\otimes q^\ast K_C^{\otimes-2k}\cong q^\ast K_C^{\otimes-(2k+1)},
\end{equation}
it follows from (\ref{eq:D}) that this section is the pullback of a rational section $\sigma$ of the line bundle $K_C^{\otimes-(2k+1)}$. Since $q^\ast\sigma$ is regular on $S_\alpha$, the faithful flatness of the projection $q:S_\alpha\rightarrow C$ implies that
\begin{equation}
\sigma\in H^0\left(C,K_C^{\otimes-(2k+1)}\right),
\end{equation}
which contradicts with the assumption that $g(C)\geq2$ if $\sigma$ is non-trivial. Thus $D\in\mathrm{LND}(A_\alpha)$ has to vanish on $B_\alpha$ and we have proved $\mathit{ML}(A_\alpha)\supset B_\alpha$.
\end{proof}

We can now finish the proof of Proposition \ref{proposition:orbit}, therefore also the proof of Theorem \ref{theorem:main}.

\begin{proof}[Proof of Proposition \ref{proposition:orbit}]
Given any isomorphism $X_{g,\alpha}\cong X_{g,\beta}$, there is an isomorphism between the corresponding Makar--Limanov invariants, i.e.
\begin{equation}
\mathit{ML}(A_\alpha)\cong\mathit{ML}(A_\beta).
\end{equation}
By Lemma \ref{lemma:ML}, it follows that $B_\alpha\cong B_\beta$, thus there is an isomorphism $\varphi:S_\alpha\xrightarrow{\cong} S_\beta$ between the corresponding torsors. Denote by $q_\alpha$ and $q_\beta$ the projections of the torsors $S_\alpha$ and $S_\beta$ to the same curve $C$, since the morphism $q_\beta\circ\varphi:S_\alpha\rightarrow C$ is constant along the $\mathbb{A}^1$-fibers for the same reason as in the proof of Proposition \ref{proposition:C}, we get a morphism $\theta_C:C\rightarrow C$, which is easily seen to be an automorphism since $\varphi$ is an isomorphism. After replacing $S_\beta$ with $h^\ast S_\beta$, we get an isomorphism between $S_\alpha$ and $S_\beta$ over $C$. In terms of the local coordinates on the $\mathbb{A}^1$-fiber over the open subset $U_i\subset C$, such an isomorphism has the form
\begin{equation}
x_i\mapsto f_ix_i+g_i
\end{equation}
for some $f_i,g_i\in\mathbb{C}$. Compatibility on overlaps of the charts $\{U_i\}$ implies that the $u_i$'s define an automorphic of $\mathit{TC}$. Since $H^0(C,\mathcal{O}_C^\times)\cong\mathbb{C}^\times$, we see that $f_i=\lambda\in\mathbb{C}^\times$ for all $i$. The translations $g_i$ can only change the class in $H^1(C,\mathit{TC})$ by a \u{C}ech coboundary. We therefore have
\begin{equation}
\alpha=\lambda\cdot\theta_C^\ast\beta\in H^1(C,\mathit{TC}).
\end{equation}
Conversely, such a relation produces an isomorphism of the torsors, and hence also an isomorphism of the total spaces of their canonical bundles.
\end{proof}

\section{A variation of the main construction}\label{section:var}

We discuss how one can easily modify the construction of Section \ref{section:construction} to produce more affine threefolds that are Stein homotopic to $T^\ast\Sigma_g\times\mathbb{C}$. Needless to say, the ideas here are motivated by the work of Jelonek \cite{zjs}.

Pick a non-trivial line bundle $\mathcal{L}\in\mathrm{Pic}^0(C)$ in the degree $0$ Picard group, and define
\begin{equation}
X_{g,\alpha}^\mathcal{L}:=\mathit{Tot}(K_{S_\alpha}\otimes q^\ast\mathcal{L}),
\end{equation}
which is clearly an affine threefold.

\begin{proposition}\label{proposition:bihol}
$X_{g,\alpha}^\mathcal{L}$ is biholomorphic to $X_{g,\alpha}$ for any choice of $\mathcal{L}$.
\end{proposition}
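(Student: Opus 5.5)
The plan is to show that the line bundles $K_{S_\alpha}$ and $K_{S_\alpha}\otimes q^\ast\mathcal{L}$ on $S_\alpha$ are holomorphically isomorphic. A biholomorphism of the total spaces then follows fiberwise. We work entirely in the analytic category: algebraically the two bundles are expected to differ, since $q^\ast:\mathrm{Pic}(C)\to\mathrm{Pic}(S_\alpha)$ should be injective for an affine-line torsor. It therefore suffices to prove that $q^\ast\mathcal{L}$ is holomorphically trivial on $S_\alpha$.

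Since $S_\alpha$ is affine, hence Stein, the Oka--Grauert principle says that holomorphic line bundles on $S_\alpha$ are classified by their topological first Chern class in $H^2(S_\alpha;\mathbb{Z})$. The exponential sequence gives the same conclusion directly: $H^1(S_\alpha,\mathcal{O}^{an})=H^2(S_\alpha,\mathcal{O}^{an})=0$ by Cartan's Theorem B, so $c_1:H^1(S_\alpha,\mathcal{O}^{\ast,an})\to H^2(S_\alpha;\mathbb{Z})$ is an isomorphism. Next, $q:S_\alpha\to C$ is an affine bundle with contractible fibers, and as noted in the proof of Lemma \ref{lemma:diff} it admits a smooth section. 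So $q$ is a homotopy equivalence, and $c_1(q^\ast\mathcal{L})=q^\ast c_1(\mathcal{L})=q^\ast(\deg\mathcal{L})=0$, because $\mathcal{L}\in\mathrm{Pic}^0(C)$. Hence $q^\ast\mathcal{L}$ is analytically trivial. Tensoring with $K_{S_\alpha}$ gives a holomorphic bundle isomorphism $K_{S_\alpha}\otimes q^\ast\mathcal{L}\cong K_{S_\alpha}$, which induces the desired biholomorphism $X^{\mathcal{L}}_{g,\alpha}\cong X_{g,\alpha}$ covering the identity of $S_\alpha$.

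The only step needing care is the use of Theorem B (or Oka's principle) on $S_\alpha$ with its analytic structure. This is standard once $S_\alpha$ is known to be affine, which holds by the proposition of Section \ref{section:construction}. We also rely on the isomorphism of analytic line bundles being holomorphic rather than algebraic. The point to emphasize is that this trivialization cannot be algebraic in general: the nontrivial class of $\mathcal{L}$ survives in $\mathrm{Pic}(S_\alpha)$. This is precisely what makes the family $X^{\mathcal{L}}_{g,\alpha}$ interesting in the spirit of Jelonek, but it is not needed for the biholomorphism statement itself.
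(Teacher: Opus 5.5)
Your proof is correct and follows the same route as the paper. Cartan's Theorem B, via the exponential sequence, identifies holomorphic line bundles on the Stein surface $S_\alpha$ with $H^2(S_\alpha;\mathbb{Z})$, and $c_1(q^\ast\mathcal{L})=q^\ast c_1(\mathcal{L})=0$ because $\deg\mathcal{L}=0$, so $K_{S_\alpha}\otimes q^\ast\mathcal{L}\cong K_{S_\alpha}$ holomorphically (your remark that $q$ is a homotopy equivalence is harmless but unnecessary, since naturality of $c_1$ already gives the vanishing).
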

\begin{proof}
This is a consequence of Cartan's Theorem B, which implies that $\mathrm{Pic}_\mathrm{hol}(S_\alpha)\cong H^2(S;\mathbb{Z})$, where $\mathrm{Pic}_\mathrm{hol}(S_\alpha)$ denotes the Picard group of holomorphic line bundles on $S_\alpha$. Since $\deg(\mathcal{L})=0$, we have $c_1(q^\ast\mathcal{L})=0$. It follows that as holomorphic line bundles, the twisted canonical bundle $K_{S_\alpha}\otimes q^\ast\mathcal{L}$ is isomorphic to untwisted one $K_{S_\alpha}$.
\end{proof}

\begin{proposition}\label{proposition:noniso}
As affine algebraic threefolds, $X_{g,\alpha}^\mathcal{L}$ is not isomorphic to $X_{g,\beta}$ for any choices of the algebraic curve $C'$ and $0\neq\beta\in H^1(C',\mathit{TC}')$.
\end{proposition}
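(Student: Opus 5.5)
The plan is to argue by contradiction from a hypothetical isomorphism $\phi:X_{g,\alpha}^\mathcal{L}\xrightarrow{\cong}X_{C',\beta}$. The argument recovers successively the curve, the torsor, and finally the line bundle on the torsor as intrinsic invariants of the affine threefold, and then uses that $q^\ast\mathcal{L}$ is not algebraically trivial.

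First, $X_{g,\alpha}^\mathcal{L}\to S_\alpha\to C$ is still Zariski locally an $\mathbb{A}^2$-bundle. The argument of Proposition \ref{proposition:C} therefore applies verbatim: morphisms $\mathbb{A}^2\to C'$ are constant, so $C\cong C'$. After composing with this isomorphism we may assume $C'=C$.

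Next I would compute the Makar--Limanov invariant of $A_\alpha^\mathcal{L}:=H^0(X_{g,\alpha}^\mathcal{L},\mathcal{O})=\bigoplus_{n\geq0}H^0(S_\alpha,q^\ast(K_C^{\otimes-2n}\otimes\mathcal{L}^{-n}))$, and show that it again equals $B_\alpha$, by repeating the proof of Lemma \ref{lemma:ML}.
\begin{itemize}
\item The inclusion $\mathit{ML}\subset B_\alpha$ comes from fiberwise translations by sections of $K_{S_\alpha}\otimes q^\ast\mathcal{L}$. These exist and are nonzero because $S_\alpha$ is affine.
\item For the reverse inclusion, a homogeneous degree $k$ locally nilpotent derivation nonvanishing on $B_\alpha$ again has the form $\mu t^k\,d/dx$ over the generic point. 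Its restriction to $B_\alpha$ now gives a regular section of $q^\ast(K_C^{\otimes-(2k+1)}\otimes\mathcal{L}^{-k})$, which descends by faithful flatness to $C$.
\item That line bundle has degree $(2k+1)(2-2g)<0$, so the section vanishes and the contradiction goes through.
\end{itemize}
Since the Makar--Limanov invariant is preserved by isomorphisms, $\phi$ induces an isomorphism $\psi:S_\alpha\to S_\beta$ with $\rho_\beta\circ\phi=\psi\circ\rho_\alpha$. Here $\rho$ denotes the bundle projections, which are exactly the morphisms $X\to\mathrm{Spec}\,\mathit{ML}$.

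The key step, which I expect to be the main obstacle, is to upgrade $\phi$ to a statement about line bundles. Since $\phi$ covers $\psi$, it maps each $\mathbb{A}^1$-fiber of $\rho_\alpha$ isomorphically onto an $\mathbb{A}^1$-fiber of $\rho_\beta$. In local trivializations over an open $V\subset S_\alpha$ it therefore reads $t\mapsto u\,t+v$, where $u\in\mathcal{O}(V)^\times$ and $v\in\mathcal{O}(V)$: an automorphism of $\mathbb{A}^1_{\mathcal{O}(V)}$ over a reduced base is affine with unit leading coefficient. Comparing transition functions, the linear parts $u$ define an isomorphism of line bundles
\[
K_{S_\alpha}\otimes q^\ast\mathcal{L}\cong\psi^\ast K_{S_\beta}.
\]
The translation parts only record that one is comparing affine bundles, and the zero sections give both the structure of line bundles. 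Because the canonical bundle is intrinsic, $\psi^\ast K_{S_\beta}\cong K_{S_\alpha}$, so $q^\ast\mathcal{L}$ is trivial in $\mathrm{Pic}(S_\alpha)$. Some care is needed to justify the local affine form of $\phi$. If the direct check is delicate, I would instead use that $\phi$ intertwines the $\mathbb{G}_a$-actions generated by the homogeneous derivations of degree $-1$, which identifies the graded pieces.

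Finally, $q:S_\alpha\to C$ is a Zariski locally trivial $\mathbb{A}^1$-bundle over a smooth curve. Homotopy invariance of the Picard group of smooth (regular) varieties then gives that $q^\ast:\mathrm{Pic}(C)\to\mathrm{Pic}(S_\alpha)$ is an isomorphism. The standard argument is via divisor class groups and localization, using $\mathrm{Cl}(\mathbb{A}^1_U)=\mathrm{Cl}(U)$. Hence $\mathcal{L}\cong\mathcal{O}_C$, contradicting the choice $\mathcal{L}\neq\mathcal{O}_C$, and Proposition \ref{proposition:noniso} follows.
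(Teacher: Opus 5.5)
Your argument is correct, but it takes a different and much heavier route than the paper.

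\textbf{The paper's proof.} It never recovers $C$ or $S_\alpha$, and never analyses the shape of an isomorphism. Exactly as in Lemma \ref{lemma:K}, it computes $K_{X_{g,\alpha}^\mathcal{L}}\cong\pi_\mathcal{L}^\ast\bigl(K_{S_\alpha}\otimes(K_{S_\alpha}\otimes q^\ast\mathcal{L})^{-1}\bigr)\cong\pi_\mathcal{L}^\ast q^\ast\mathcal{L}^{-1}$. This bundle is non-trivial because $\pi_\mathcal{L}^\ast$ is injective on Picard groups (the zero section gives a left inverse) and $q^\ast:\mathrm{Pic}(C)\to\mathrm{Pic}(S_\alpha)$ is an isomorphism. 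Since $K_{X_{C',\beta}}$ is trivial and the canonical bundle is an isomorphism invariant, the threefolds cannot be isomorphic.

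\textbf{Your proof.} You use the intrinsic nature of the canonical bundle only one level down, on the surface. That obliges you to do two extra things:
\begin{itemize}
\item identify $S_\alpha$ through the Makar--Limanov invariant. Your twisted version of Lemma \ref{lemma:ML} is right, since $K_C^{\otimes-(2k+1)}\otimes\mathcal{L}^{-k}$ still has negative degree;
\item show that the isomorphism is fibrewise affine. Its linear part $v\mapsto\phi(v)-\phi(0)$ is then a global isomorphism $K_{S_\alpha}\otimes q^\ast\mathcal{L}\cong\psi^\ast K_{S_\beta}\cong K_{S_\alpha}$, and this step is sound as you describe it.
\end{itemize}
Both proofs end with the same input, injectivity of $q^\ast$ on Picard groups.

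\textbf{What each route buys.} Your approach yields more structure: every isomorphism covers an isomorphism of torsors and matches the fibre line bundles, which is the kind of rigidity used in Proposition \ref{proposition:orbit}. The cost is that a mere non-isomorphism statement now depends on the most delicate part of the paper, the reduction to a homogeneous locally nilpotent derivation in Lemma \ref{lemma:ML}. The canonical class settles it in two lines. Even comparing two twisted threefolds $X_{g,\alpha}^\mathcal{L}$ and $X_{g,\beta}^\mathcal{K}$ needs no more than the canonical class plus the descent $\pi_{C'}\circ\phi=\theta\circ\pi_C$ from Proposition \ref{proposition:C}, which gives $\mathcal{L}\cong\theta^\ast\mathcal{K}$ for the induced isomorphism $\theta$ of curves.

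\textbf{A redundant step.} Your first step, showing $C\cong C'$, is never used afterwards. The identity $\psi^\ast K_{S_\beta}\cong K_{S_\alpha}$ holds for any isomorphism $\psi$, and the final step only needs the projection $S_\alpha\to C$.
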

\begin{proof}
Denote by $\pi_\mathcal{L}:X_{g,\alpha}^\mathcal{L}\rightarrow S_\alpha$ the line bundle projection, we have
\begin{equation}
K_{X_{g,\alpha}^\mathcal{L}}\cong\pi_\mathcal{L}^\ast\left(K_{S_\alpha}\otimes(K_{S_\alpha}\otimes q^\ast\mathcal{L})^{-1}\right)\cong\pi_\mathcal{L}^\ast q^\ast\mathcal{L}^{-1}
\end{equation}
as algebraic line bundles. To prove the proposition, it is enough to show that the line bundle $\pi_\mathcal{L}^\ast q^\ast\mathcal{L}^{-1}$ is non-trivial on $X_{g,\alpha}^\mathcal{L}$. To see this, note that as a $\mathit{TC}$-torsor, $q:S_\alpha\rightarrow C$ is an $\mathbb{A}^1$-weak equivalence, therefore induces an isomorphism
\begin{equation}
q^\ast:\mathrm{Pic}(C)\xrightarrow{\cong}\mathrm{Pic}(S_\alpha)
\end{equation}
between Picard groups. See \cite{mva}, Proposition 3.8. Since the map
\begin{equation}
\pi_\mathcal{L}^\ast:\mathrm{Pic}(S_\alpha)\rightarrow\mathrm{Pic}(X_{g,\alpha}^\mathcal{L})
\end{equation}
induced by bundle projection is clearly injective, the non-triviality of $\pi_\mathcal{L}^\ast q^\ast\mathcal{L}^{-1}$ follows from the non-triviality of $\mathcal{L}$.
\end{proof}

We remark that Propositions \ref{proposition:bihol} and \ref{proposition:noniso} are not sufficient to give an embedding of the fiber product $\mathbb{P}(T\mathcal{M}_g)\times_{\mathcal{M}_g}\mathcal{J}_g$, where $\mathcal{J}_g\rightarrow\mathcal{M}_g$ is the universal Jacobian over the moduli space of smooth genus $g$ curves, into the coarse moduli space of affine algebraic structures on $T^\ast\Sigma_g\times\mathbb{C}$ which are homotopic to the standard Stein structure, because whether $X_{g,\alpha}^\mathcal{L}$ is isomorphic to $X_{g,\alpha}^\mathcal{K}$ for different choices of $\mathcal{L},\mathcal{K}\in\mathrm{Pic}^0(C)$ is in general not known. It remains an interesting question to produce an exhaustive family of such affine threefold structures on the stabilized cotangent bundles $T^\ast\Sigma_g\times\mathbb{C}$.

\end{document}